\renewcommand{\ker}{\textsf{k}}
\newcommand{\matker}{\tilde{\ker}_\text{mk}}
\newcommand{\uz}{\underline{z}}
\newcommand{\ux}{\underline{x}}
\renewcommand{\E}{\mathbb{E}}
\newcommand{\HH}{\mathcal{H}}
\renewcommand{\d}{\mathsf{d}}
\renewcommand{\KL}{\mathsf{KL}}
\newcommand{\KSD}{\mathsf{KSD}}
\newcommand{\Hent}{\mathsf{H}}
\newtheorem{theorem}{Theorem}
\newtheorem{assumption}{Assumption}
\newtheorem{lemma}{Lemma}
\newtheorem{proposition}{Proposition}
\newtheorem{remark}{Remark}
\title{Improved Finite-Particle Convergence Rates for Stein Variational Gradient Descent}
\author[1]{Sayan Banerjee}
\author[2]{Krishnakumar Balasubramanian}
\author[3]{Promit Ghosal}
\affil[1]{Department of Statistics and Operations Research, University of North Carolina, Chapel Hill. \texttt{sayan@email.unc.edu}}
\affil[2]{Department of Statistics, University of California, Davis. \texttt{kbala@ucdavis.edu}}
\affil[3]{Department of Statistics, University of Chicago. \texttt{promit@uchicago.edu}}
\date{}
\begin{document}

\maketitle

\begin{abstract}
 We provide finite-particle convergence rates for the Stein Variational Gradient Descent (SVGD) algorithm in the Kernelized Stein Discrepancy ($\KSD$) and Wasserstein-2 metrics. Our key insight is that the time derivative of the relative entropy between the joint density of $N$ particle locations and the $N$-fold product target measure, starting from a regular initial distribution, splits into a dominant `negative part' proportional to $N$ times the expected $\KSD^2$ and a smaller `positive part'. This observation leads to $\KSD$ rates of order $1/\sqrt{N}$, in both continuous and discrete time, providing a near optimal (in the sense of matching the corresponding i.i.d. rates) double exponential improvement over the recent result by~\cite{shi2024finite}. Under mild assumptions on the kernel and potential, these bounds also grow polynomially in the dimension $d$. By adding a bilinear component to the kernel, the above approach is used to further obtain Wasserstein-2 convergence in continuous time. For the case of `bilinear + Mat\'ern' kernels, we derive Wasserstein-2 rates that exhibit a curse-of-dimensionality similar to the i.i.d. setting. We also obtain marginal convergence and long-time propagation of chaos results for the time-averaged particle laws. 
\end{abstract}

\section{Introduction} 

Stein Variational Gradient Descent (SVGD)~\citep{liu2016stein} is a widely-used deterministic particle-based algorithm for sampling from a target density $\pi\propto \exp(-V)$, where $V:\mathbb{R}^d \to \mathbb{R}$ is the potential function. For a given symmetric, positive-definite kernel $\ker:\mathbb{R}^d\times\mathbb{R}^d \to \mathbb{R}$, discrete time-step $ n \in \mathbb{N}_0,$ step-size $\eta >0$, and for $1\leq i\leq N$, the SVGD algorithm is given by
\begin{align}\label{eq:DTSVGD}
    x^N_i(n+1) = x^N_i(n) - \frac{\eta}{N}\sum_j \left[  \ker(x^N_i(n), x^N_j(n)) \nabla V(x^N_j(n)) -  \nabla_2 \ker(x^N_i(n), x_j^N(n))\right].
\end{align}
SVGD provides a compelling alternative to more classical randomized sampling algorithms like Markov Chain Monte Carlo (MCMC) that require additional uncertainty quantification with respect to the algorithmic randomness. It has attracted considerable attention in the machine learning and applied mathematics communities because of its fascinating theoretical properties and broad range of applications~\citep{fenglearning,haarnoja2017reinforcement,lambert2021stein,liu2021sampling,xu2022accurate}. Our focus in this work is on deriving rates of convergence of the SVGD algorithm in~\eqref{eq:DTSVGD} and the corresponding continuous-time, $N$-particle SVGD dynamics on $\mathbb{R}^d$, obtained by letting $\eta \to 0_+$, given by 
\begin{align}\label{eq:CTSVGD}
    \dot{x}^N_i(t) = -\frac{1}{N} \sum_j \ker(x^N_i(t), x^N_j(t)) \nabla V(x^N_j(t)) + \frac{1}{N} \sum_j \nabla_2 \ker(x^N_i(t), x_j^N(t)),
\end{align}
with $\dot{x}$ denoting the time derivative and $\nabla_2$ represents gradient with respect to the second argument. Throughout the paper, we will implicitly assume the existence of a solution to the above equation and the completeness of the vector field driving the above dynamics in $\left(\mathbb{R}^{d}\right)^N$. This is satisfied, for example, under standard continuity and linear growth assumptions on the driving vector field.

The motivation for SVGD originates from the \emph{gradient flow for the relative entropy (i.e., the $\KL$ divergence)} on the Wasserstein-2 space of probability measures on $\mathbb{R}^d$. More precisely, for a probability measure $\mu$ on $\mathbb{R}^d$ possessing a regular enough positive density, the Wasserstein gradient flow is given by the measure-valued trajectory $\mu_t$ satisfying the continuity equation
\begin{equation}\label{conteq}
\partial_t\mu_t + \nabla\cdot(v_t\mu_t)=0, \quad \mu_0=\mu,
\end{equation} 
where $v_t = -\nabla \log (p_t/\pi)$ and $p_t$ is the density of $\mu_t$. Under suitable conditions, $\mu_t$ can be shown to converge (often with quantifiable fast rates) to $\pi$. Unfortunately, this approach is not practically implementable via particle discretization as the associated empirical measure approximating $\mu_t$ does not possess a density. 

In a very influential paper, \cite{liu2016stein} devised a \emph{projected} gradient descent algorithm by projecting the velocity vector $v_t$ along a reproducing kernel Hilbert space (RKHS) associated with a symmetric positive definite kernel $\ker$. This leads to a flow analogous to \eqref{conteq} but with $v_t = -P_{\mu_t}\nabla \log (p_t/\pi)$, where the projection $P_{\mu_t}$ is given by $P_{\nu} f (x) \coloneqq \int\ker(x,y)f(y)\nu(\d y)$ for probability measure $\nu$ and function $f : \mathbb{R}^d \rightarrow \mathbb{R}$ for which the integral is well-defined. The key observation of \cite{liu2016stein} was that, by applying integration by parts, one obtains
\begin{equation*}
-P_{\mu_t}\nabla \log (p_t/\pi) (x) = \int \left(-k(x,y)\nabla V(y) + \nabla_2 k(x,y)\right)\mu_t(\d y).
\end{equation*}
The right hand side is well-defined even when $\mu_t$ lacks a density and is hence amenable to particle discretization, which leads to the SVGD equations \eqref{eq:DTSVGD} and \eqref{eq:CTSVGD}. See \cite{korba2020non} for a more detailed description of this approach.

\textbf{Challenges for finite-particle SVGD: } There has been extensive work in quantifying convergence rates for the mean-field SVGD equation (see `\textbf{Past works}' below). However,  only~\cite{shi2024finite} and~\cite{liu2024towards}  have made attempts towards obtaining rates for the finite-particle version of (deterministic) SVGD. This has been perceived as a challenging open problem till date. The tractability of the mean-field SVGD equation comes from the observation that it has a (projected) gradient structure which leads to the following monotonicity property of the KL-divergence:
$$
\partial_t\KL(\mu_t || \pi) = -\KSD^2(\mu_t || \pi), \quad t \ge 0,
$$
where $\KSD$ stands for the Kernelized Stein Discrepancy (\cite{chwialkowski2016kernel,liu2016kernelized,gorham2017measuring}). The non-negativity of $\KL$ then leads to bounds on the $\KSD$. For the finite-particle versions \eqref{eq:DTSVGD} and \eqref{eq:CTSVGD}, there is no gradient structure to the dynamics, which renders the above approach inapplicable. Moreover, the vector field driving the finite-particle dynamics is not globally Lipschitz and lacks suitable convexity properties. This results in double-exponentially growing bounds in time between the particle empirical distribution and the mean-field limit (see \citet[Prop. 2.6]{lu2019scaling} and \citet[Thm. 1]{shi2024finite}). As a consequence, attempts to demonstrate finite-particle convergence to the target distribution $\pi$ by relying on the mean-field convergence and the convergence of the mean-field equation to the target in the general (non-Gaussian) setting lead to a slow convergence rate of $1/\sqrt{\log \log N}$. In \cite{das2023provably}, the authors intentionally bypassed this approach for the finite-particle setting, achieving improved convergence rates, but their method requires a distinct, albeit related, algorithm that incorporates additional randomness into the dynamics.

\textbf{Our contributions: } A key insight in this paper is to work with the \emph{joint density} of the particle locations, when started from a suitably regular initial distribution, and track the evolution of its relative entropy with respect to the $N$-fold product measure $\pi^{\otimes N}$. It turns out that the time derivative of this relative entropy has a `negative part' that is exactly $N$ times the expected $\KSD^2$ of the empirical measure at time $t$ with respect to $\pi$, and a `positive part' that can be separately handled and shown to be small in comparison to the negative part (see \eqref{entbd}). This gives a novel connection between the joint particle dynamics and the empirical measure evolution. %While analyzing the deterministic SVGD, in comparison to~\cite{shi2024finite} which uses \ref{itm:1} and \ref{itm:3} or~\cite{liu2024towards} which uses \ref{itm:2} and \ref{itm:4}, we directly provide convergence rates for \ref{itm:5} using the above strategy. 

Our first main result, Theorem \ref{thm:KSDrates}, exploits this observation to obtain $O(1/\sqrt{N})$ bounds for the expected $\KSD$ between $\mu_{av}^N\coloneqq \frac{1}{N}\int_0^N \mu^N(t) \d t$ and $\pi$ for the continuous-time SVGD dynamics in \eqref{eq:CTSVGD}. Analogous bounds for the discrete-time SVGD dynamics in \eqref{eq:DTSVGD} are obtained in Theorem \ref{discrates}. Together, these results constitute a \emph{double exponential improvement over \cite{shi2024finite} for the true SVGD algorithm}. As discussed in Remark \ref{optrem}, the bounds in Theorem \ref{thm:KSDrates} are \emph{essentially optimal} when compared with the $\KSD$ in the i.i.d. setting, and \emph{grow linearly in $d$} (that is, $\KSD$ is $O(d/\sqrt{N})$) under mild assumptions on the kernel and the potential. Moreover, unlike previous works even for the mean-field SVGD, we do not require any assumptions on the tail behavior (such as sub-Gaussianity) of the target $\pi$ in Theorem \ref{thm:KSDrates}. Further, it follows from \cite{gorham2017measuring} that the $\KSD$ bound alone does not even guarantee weak convergence of the particle marginal laws as $N \rightarrow \infty$, unless one establishes tightness of these laws. Our approach gives control on the relative entropy of the joint law in time which, in turn, gives the desired tightness and weak convergence for the time-averaged particle marginal laws $\bar\mu^N(\cdot)\coloneqq \frac{1}{N} \int_0^N \mathbb{P}(x_1(t) \in \cdot) \d t$, for exchangeable initial conditions, see Theorem \ref{thm:marginalrates}.

The discrete-time SVGD bound in Theorem \ref{discrates}, although similar in flavor to Theorem \ref{thm:KSDrates}, is substantially more involved and requires careful control on the discretization error. 
In this result, we incorporate a parameter $\alpha$ that lets us interpolate between exponential tails and Gaussian tails as $\alpha$ varies from $0$ to $1/2$. This parameter turns out to be crucial in choosing the step-size $\eta$, which is $\approx d^{-(\frac{1+\alpha}{2(1-\alpha)}\vee 1)}N^{-\frac{1+\alpha}{1-2\alpha}}$, and the number of iterations required $\approx N^{\frac{2 - \alpha}{1-2\alpha}}$, to obtain $\KSD$ bounds which are $O\left(d^{(\frac{3-\alpha}{4(1-\alpha)}\vee 1)}N^{-1/2}\right)$. Unlike the population limit discrete-time SVGD rates previously obtained in \cite{korba2020non,salim2022convergence}, the Hilbert-Schmidt norm of the Jacobian of the transformation associated with each iteration depends non-trivially on the initial configuration and the number of iterations. A key technical ingredient in controlling this is an `a priori' bound on the functional $n \mapsto N^{-1}\sum_i V(x^N_i(n))$ obtained in Lemma \ref{apriori}.

In Section \ref{sec:Wass}, we obtain \emph{Wasserstein-2 convergence and associated rates}. For this purpose, we heavily rely on the treatise of \cite{kanagawa2022controlling} which connects $\KSD$ convergence to Wasserstein convergence when the kernel has a bilinear component and a translation invariant component of the form $(x,y) \mapsto \Psi(x-y)$ (see \eqref{eq:specifickernel}). Such kernels are typically unbounded, in contrast with standard boundedness assumptions in most papers on SVGD (a notable exception is \cite{liu2024towards}). In Theorem \ref{thm:w2asymp}, under dissipativity and growth assumptions on the potential $V$, we obtain polynomial $\KSD$ convergence rates for SVGD finite-particle dynamics with such kernels, which by \cite{kanagawa2022controlling} imply Wasserstein convergence. When the translation invariant part of the kernel is of Mat\'ern type, we obtain Wasserstein convergence rates in Theorem \ref{thm:w2rates} of the form $O(1/N^{\alpha/d})$ (where $\alpha>0$ does not depend on $d$) for the particle SVGD using Theorem \ref{thm:w2asymp} in conjunction with results in \cite{kanagawa2022controlling}. This is the first work on Wasserstein convergence for non-Gaussian SVGD finite-particle dynamics. Unlike  the $\KSD$ bound, the $d$ dependence leads to \emph{curse-of-dimensionality} in the Wasserstein bound, but this is to be expected when compared to Wasserstein bounds for empirical distribution of i.i.d. random variables \citep{dudley1969speed,weed2019sharp}. Finally, we obtain a \emph{long-time propagation of chaos} result in Proposition \ref{POC}, namely, we show that the time-averaged marginals of the particle locations over the time interval $[0,N]$, started from an exchangeable initial configuration, become asymptotically independent as $N \to \infty$ and essentially produce i.i.d samples from $\pi$. Although the results in Sections \ref{sec:Wass} and \ref{sec:poc} are proved for the continuous-time SVGD in~\eqref{eq:CTSVGD} to highlight the main ideas, analogous results can also be proved in discrete-time and is deferred to future work.

\textbf{Past works:} The following diagram from~\cite{liu2024towards} highlights the major approaches undertaken in rigorously analyzing the SVGD dynamics:
\vspace{0.2in}
\[\begin{tikzcd}
	{\text{Initial particles}\atop \mu^N(0)=\frac{1}{N}\sum_{j=1}^{N}\delta_{{x}_{j}(0)}} & {\text{Evolving particles}\atop \mu^N(t)=\frac{1}{N}\sum_{j=1}^{N}\delta_{{x}_{j}(t)}} & {\text{Equilibrium}\atop \mu^N(\infty)=\frac{1}{N}\sum_{j=1}^{N}\delta_{{x}_{j}(\infty)}}
	\\
	{\text{Initial density}\atop \mu_{0}} & {\text{Evolving density}\atop \mu_{t}} & {\text{Target}\atop\mu_{\infty}= \pi}
	\arrow["{(c)}", from=1-2, to=2-2]
	\arrow["{(b)}", from=2-2, to=2-3]
	\arrow["{(e)}", from=1-3, to=2-3]
	\arrow["{(d)}", from=1-2, to=1-3]
	\arrow["{(a)}", from=1-2, to=2-3]
	\arrow[from=1-1, to=1-2]
	\arrow[from=2-1, to=2-2]
	\arrow[from=1-1, to=2-1]
\end{tikzcd}\]

\begin{enumerate}[noitemsep,label=(\alph*)]
  \item\label{itm:5} Unified convergence of the empirical measure for $N< \infty$ particles to the continuous target as time $t$ and $N$ jointly grow to infinity;
  \item\label{itm:1} Convergence of mean-field SVGD to the target distribution over time;
  \item\label{itm:3} Convergence of the empirical measure for finite particles to the mean-field distribution at any finite given time $t\in [0,\infty)$;
  \item\label{itm:2} Convergence of finite-particle SVGD to equilibrium over time;
  \item\label{itm:4} Convergence of the empirical measure for finite particles to the continuous target at time $t=\infty$.
\end{enumerate}

Practically speaking, \ref{itm:5} is the ideal outcome that completely defines the algorithmic behavior of SVGD. One approach towards this is to combine either \ref{itm:1} and \ref{itm:3} or \ref{itm:2} and \ref{itm:4} in a quantitative way to yield \ref{itm:5}. Regarding \ref{itm:1}, \cite{liu2017stein} showed the convergence of mean-field SVGD (solution to \eqref{conteq} with $v_t = -P_{\mu_t}\nabla \log (p_t/\pi)$) in $\KSD$ which is known to imply weak convergence under appropriate assumptions. \cite{korba2020non,chewi2020svgd,salim2022convergence,sun2023convergence,nusken2023geometry} sharpened the results with weaker conditions or explicit rates. \cite{he2024regularized} extended the above result to the stronger Fisher information metric and Kullback–Leibler divergence based on a regularization technique.
\cite{lu2019scaling,gorham2020stochastic,korba2020non} obtained time-dependent mean-field convergence \ref{itm:3} under various assumptions using techniques from partial differential equations and from the literature of `propagation of chaos'. In particular, \cite{lu2019scaling} derived the mean-field PDE \eqref{conteq} for the evolving density that emerges as the mean-field limit of the finite-particle SVGD systems, and showed the well-posedness of the PDE solutions.~\cite{carrillo2023convergence} established refined stability estimates in comparison to~\cite{lu2019scaling} for the mean-field system when the initial distribution is close to the target distribution in a suitable sense. In particular, they increase the length of the time interval in which mean-field approximation is meaningful from $\approx \log \log N$ to $\approx \sqrt{N}$ for such initial data close to the target.

\cite{shi2024finite} obtained refined results for \ref{itm:3} and combined them with \ref{itm:1} to get the \emph{first unified convergence} \ref{itm:5} in terms of KSD. However, they have a rather slow rate of order $1/\sqrt{\log\log N}$, resulting from the fact that their bounds for \ref{itm:3} still depend on the time $t$ (sum of step sizes) double-exponentially. Note that studying the convergence \ref{itm:2} and \ref{itm:4}, provides another way to characterize the unified convergence \ref{itm:5} for SVGD. \cite{liu2024towards} analyzed this strategy for the \emph{Gaussian SVGD} case where the target distribution $\pi$ and initial distribution $\mu$ are both Gaussian and the kernel $\ker$ is bilinear. In this case, the flow of measures for the mean-field SVGD remains Gaussian for all time and this fact was exploited to obtain detailed rates and `uniform-in-time' propagation of chaos results. \cite{das2023provably} obtained a polynomial convergence rate ($O(N^{-\alpha})$ for some $\alpha>0$) for a related but different algorithm, which they called \emph{SVGD with virtual particles}, by adding more randomness to the dynamics and using stochastic approximation techniques. The recent work of~\cite{priser2024long} also studies finite-particle asymptotics, albeit for not the original SVGD iterates (as in~\eqref{eq:DTSVGD}) but for a modified one where a Langevin-type regularization including a Gaussian noise is added at each step. Hence, they leverage existing techniques for Langevin Monte Carlo to establish their results. However, their techniques are not applicable to the deterministic SVGD system in~\eqref{eq:DTSVGD}.

\textbf{Notation:} We will say a function $f$ is $\mathcal{C}^k$ if it is $k$ times continuously differentiable in its arguments. $\mathcal{L}(X)$ will denote the law of the random variable $X$. We let $\mathcal{B}(\mathbb{R}^d)$ denote the Borel sigma-algebra on $\mathbb{R}^d$. We use $\pi^{\otimes N}$ N-fold product target measure, i.e., $\pi^{\otimes N} (x_1, \ldots, x_N) \coloneqq \pi(x_1)\times \cdots \times \pi(x_N)$. Throughout the article (particularly in the proofs), we will often suppress the superscript $N$ for various objects when it is clear from context. Furthermore, underlined vectors (e.g., $\ux$) denote objects in $\left(\mathbb{R}^d\right)^N$.

\section{Continuous-time Finite-Particle Convergence Rates in KSD Metric}
We first provide rates in the $\KSD$ metric.  Let $\HH_0$ denote the reproducing kernel Hilbert space (RKHS) of real-valued functions associated with the positive definite kernel $\ker$ \citep{aronszajn1950theory}. Then $\HH \coloneqq \HH_0 \times \dots \times \HH_0$ inherits a natural RKHS structure comprising $\mathbb{R}^d$-valued functions. The \emph{Langevin-Stein operator}~\citep{gorham2015measuring} $\mathcal{T}_{\pi}$, associated with $\pi \propto e^{-V}$, acts on differentiable functions $\phi: \mathbb{R}^d \rightarrow \mathbb{R}^d$ by
$$
\mathcal{T}_{\pi}\phi(x) \coloneqq -\nabla V(x)\cdot \phi(x) + \nabla \cdot \phi(x), \quad x \in \mathbb{R}^d.
$$
The \emph{Kernelized Stein Discrepancy}\footnote{See~\citet[Section 2.2]{barp2209targeted} for additional technical details regarding the well-definedness of $\KSD$.} ($\KSD$) \citep{chwialkowski2016kernel, gorham2017measuring}, associated with the kernel $\ker$, of a probability measure $P$ on $\mathbb{R}^d$ with respect to $\pi$ is defined as
\begin{equation}\label{KSD1}
\KSD(P || \pi) \coloneqq \sup\{\E\left[\mathcal{T}_{\pi}\phi(X)\right]: \ X \sim P, \, \phi \in \HH, \|\phi\|_{\HH} \le 1\}.
\end{equation}
The definition of $\KSD$ is motivated by \emph{Stein's identity} which says that, for any sufficiently regular $\phi$, $\E_{X \sim \pi}\left[\mathcal{T}_{\pi}\phi(X)\right] = 0$ and thus, the above measures the `distance' of $P$ from $\pi$ via the maximum discrepancy of this expectation from $0$ when $X \sim P$, as $\phi$ varies over $\HH$.

The appeal of $\KSD$ lies in the fact that, unlike most distances on the space of probability measures, $\KSD$ has an explicit tractable expression. The function $\phi^* \in \HH$ for which the above supremum is attained has a closed form expression $\phi^*(x) \propto \E_{Y \sim P}\left[-\ker(Y,x)\nabla V(Y) + \nabla_1\ker(Y,x)\right]$. Using this, we get the following expression for $\KSD$:
\begin{align}\label{KSD2}
    \KSD^2(P || \pi) &= \E_{(X,Y) \sim P \otimes P}\left[\nabla V(X)\cdot (\ker(X,Y)\nabla V(Y)) - \nabla V(X)\cdot \nabla_2\ker(X,Y)\right.\notag\\
     &\qquad \qquad \qquad \qquad \qquad \qquad \qquad \left. - \nabla V(Y)\cdot \nabla_1\ker(X,Y) + \nabla_1 \cdot \nabla_2 \ker(X,Y)\right].
\end{align}

Before proceeding, we introduce the following regularity conditions, and state an existence and regularity result (proved in Appendix \ref{app}) for the joint particle density.

\begin{assumption}\label{regularity}
We make the following regularity assumptions.
\begin{itemize}
\item[(a)] The maps $(x,y) \mapsto \ker(x,y)$ and $x \mapsto V(x)$ are $\mathcal{C}^3$.
\item[(b)] $\underline{x}^N(0) = (x^N_1(0), \dots, x^N_N(0))$ has a $\mathcal{C}^2$ density $p^N_0$.
\end{itemize}
\end{assumption}

\begin{lemma}\label{lem:density}
Consider the SVGD dynamics \eqref{eq:CTSVGD} under Assumption \ref{regularity}. Then the particle locations $(x_1(t),\dots, x_N(t))$ have a joint density $p^N(t, \cdot)$ for every $t \ge 0$, and the map $(t,\uz) \mapsto p^N(t,\uz)$ is $\mathcal{C}^2$.
\end{lemma}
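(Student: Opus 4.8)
The plan is to realize $p^N(t,\cdot)$ as the pushforward of the initial density $p^N_0$ under the flow of the ODE \eqref{eq:CTSVGD}, and to read off its regularity from smooth dependence of ODE solutions on their initial data, together with Liouville's formula for the Jacobian of the flow. First I would write \eqref{eq:CTSVGD} as the autonomous system $\dot{\ux}(t)=b(\ux(t))$ on $(\mathbb{R}^{d})^{N}$, where $b=(b_{1},\dots,b_{N})$ has components $b_{i}(\ux)=-\frac{1}{N}\sum_{j}\ker(x_{i},x_{j})\nabla V(x_{j})+\frac{1}{N}\sum_{j}\nabla_{2}\ker(x_{i},x_{j})$. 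Under Assumption \ref{regularity}(a) each summand is a product and composition of $\mathcal{C}^{2}$ maps, so $b\in\mathcal{C}^{2}$; since $b$ is assumed complete, its flow $(\Phi_{t})_{t\in\mathbb{R}}$ is globally defined, obeys $\Phi_{t+s}=\Phi_{t}\circ\Phi_{s}$ and $\Phi_{0}=\mathrm{id}$, and by the $\mathcal{C}^{k}$-dependence theorem for ODEs the map $(t,\ux)\mapsto\Phi_{t}(\ux)$ is $\mathcal{C}^{2}$ on $\mathbb{R}\times(\mathbb{R}^{d})^{N}$; in particular each $\Phi_{t}$ is a $\mathcal{C}^{2}$-diffeomorphism with inverse $\Phi_{-t}$.

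Next I would control the Jacobian $J_{t}(\ux):=\det D_{\ux}\Phi_{t}(\ux)$. Differentiating the variational equation $\tfrac{d}{dt}D_{\ux}\Phi_{t}=Db(\Phi_{t}(\ux))\,D_{\ux}\Phi_{t}$ through Jacobi's formula gives Liouville's identity $\tfrac{d}{dt}\log J_{t}(\ux)=(\nabla\cdot b)(\Phi_{t}(\ux))$ with $J_{0}\equiv1$, hence $J_{t}(\ux)=\exp\!\big(\int_{0}^{t}(\nabla\cdot b)(\Phi_{s}(\ux))\,ds\big)>0$. Thus $J_{t}$ never vanishes --- so $1/J_{t}$ is exactly as regular as $J_{t}$ --- and $(t,\ux)\mapsto J_{t}(\ux)$ is continuous and differentiable in $\ux$ on $\mathbb{R}\times(\mathbb{R}^{d})^{N}$.

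Then, since $\ux^{N}(0)$ is absolutely continuous with density $p^{N}_{0}$ (Assumption \ref{regularity}(b)) and $\ux^{N}(t)=\Phi_{t}(\ux^{N}(0))$, the change-of-variables formula shows that $\ux^{N}(t)$ has density
\begin{equation*}
p^{N}(t,\uz)=p^{N}_{0}\big(\Phi_{-t}(\uz)\big)\,J_{-t}(\uz),\qquad\uz\in(\mathbb{R}^{d})^{N},
\end{equation*}
which is nonnegative, integrates to one by the same change of variables, and is equivalently the classical solution (via the method of characteristics) of the linear transport equation $\partial_{t}p^{N}+\nabla_{\uz}\cdot(b\,p^{N})=0$ with $p^{N}(0,\cdot)=p^{N}_{0}$. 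The claimed regularity of $(t,\uz)\mapsto p^{N}(t,\uz)$ then follows by differentiating this product: $(t,\uz)\mapsto\Phi_{-t}(\uz)$ carries the $\mathcal{C}^{2}$-regularity established above, $p^{N}_{0}$ contributes its own differentiability, and $J_{-t}(\uz)$ is assembled from $\Phi$ and $\nabla\cdot b$ via composition, integration and exponentiation, so that $p^{N}$ inherits the order of smoothness dictated by the least regular of $p^{N}_{0}$, $\ker$ and $V$.

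I do not expect a deep obstacle. The two points requiring care are: (i) upgrading the \emph{local}-in-time smooth dependence on initial conditions to a \emph{global} statement, which is precisely where the standing completeness assumption on the driving vector field enters; and (ii) bookkeeping the derivatives lost in passing from $\ker$ and $V$ to $b$, and then through $D\Phi$ and $\det D\Phi$ in the change-of-variables formula, so as to land on exactly the asserted regularity of $p^{N}$.
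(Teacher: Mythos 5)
Your proposal is essentially the paper's own argument: write \eqref{eq:CTSVGD} as an autonomous ODE with a $\mathcal{C}^2$ vector field, invoke smooth dependence of the flow on $(t,\uz)$ (the paper cites Hartman), and obtain $p^N(t,\cdot)$ as the pushforward of $p^N_0$ via the change-of-variables formula $p^N(t,\uz)=p^N_0(\Phi_{-t}(\uz))\,J_{-t}(\uz)$, reading regularity off this expression. Your explicit Liouville identity for $J_t$ is a minor (and welcome) elaboration of the nonvanishing-Jacobian point that the paper handles by citation, but the route is the same.
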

The proof of this lemma is deferred to Appendix~\ref{app}. Now we proceed to bound KL-divergence between the joint density of $N$-many particles at time $t$ and the $N$-fold product measure of the target distribution $\pi$.
%Under Assumption \ref{regularity}, by Lemma \ref{lem:density}, the particle locations $(x^N_1(t), \ldots, x^N_N(t))$ have a joint density $p^N(t,\cdot)$ for each $t \ge 0$ and, moreover, the map $(t,\uz) \mapsto p^N(t,\uz)$ is $\mathcal{C}^2$.

%Let $p^N(t,\uz)$ denote the \emph{joint} density of the particles $(x^N_1(t), \ldots, x^N_N(t))$. 
%\kb{add proof of existence}

Denote the KL-divergence as
\begin{align}\label{def:KL}
\KL( p^N(t) || \pi^{\otimes N}) \coloneqq \int \log \left(\frac{p^N(t,\uz )}{\pi^{\otimes N}(\uz)} \right)  p^N(t,\uz ) \d \uz.
\end{align}
%\begin{assumption}\label{ass:A1}
%We make the following assumptions:
%\begin{itemize}
%    \item We assume that $\KL(p^N(0) || \pi^{\otimes N} ) < \infty$ for each $N \in \mathbb{N}$. Furthermore, we assume that 
%\begin{align}\label{eq:cstar}
%    C^*\coloneqq  \underset{z\in\mathbb{R}^d}{\sup} \left \{ C^*(z) \coloneqq \left[ \nabla_2 \ker(z,z) \nabla V(z) + \ker(z,z) \Delta V(z) - \Delta_2 \ker(z,z) \right] \right\}< \infty.
%\end{align}
%\end{itemize}   
%\end{assumption}
The following theorem furnishes the key bound on the $\KSD$ between the empirical law 
$$
\mu^N(t) \coloneqq \frac{1}{N} \sum_{i=1}^N \delta_{x_i(t)}, \,\qquad t \ge 0,
$$ and the target distribution $\pi$. Define 
\begin{equation}\label{eq:cstar}
C^*(z) \coloneqq \nabla_2 \ker(z,z) \cdot \nabla V(z) + \ker(z,z) \Delta V(z) - \Delta_2 \ker(z,z), \quad z \in \mathbb{R}^d.
\end{equation}
In the above, $\nabla_2\ker(z,z) \coloneqq \nabla_2\ker(z,\cdot)(z)$ and $\Delta_2\ker(z,z) \coloneqq \Delta_2\ker(z,\cdot)(z)$.

\begin{theorem}\label{thm:KSDrates}
    Let Assumption~\ref{regularity} hold. Then, we have for every $T >0$,
    \begin{align*}
        \frac{1}{T}\int_0^T \E[\KSD^2(\mu^N(t)|| \pi)] \d t \leq \frac{\KL(p^N(0) || \pi^{\otimes N} )}{NT} + \frac{1}{N^2T}\int_0^T\E\left[\sum_{k=1}^N C^*\left(x_k(t)\right)\right]\d t,
    \end{align*}
where the expectation is with respect to $p(t)$. In addition, we have that 
\begin{align*}
    \KL(p^N(T) ||\pi^{\otimes N} ) \leq \KL (p^N(0) ||\pi^{\otimes N}) + \frac{1}{N}\int_0^T\E\left[\sum_{k=1}^NC^*\left(x_k(t)\right)\right]\d t.
\end{align*}
Moreover, if $C^* \coloneqq \sup_{z \in \mathbb{R}^d} C^*(z) < \infty$ and $\underset{N \to \infty}{\limsup}~\KL (p^N(0) ||\pi^{\otimes N})/N < \infty$, then
\begin{align}\label{Kbd}
    (\E[\KSD(\mu_{av}^N || \pi)])^2 \leq \frac{1}{N} \int_0^N \E[\KSD^2(\mu^N(t)|| \pi)] \d t \leq \frac{\sup_{L} \frac{\KL(p^L(0) || \pi^{\otimes L} )}{L} + C^*}{N},
\end{align}
where $\mu_{av}^N (\d x)\coloneqq \frac{1}{N}\int_0^N \mu^N(t,\d x) \d t$.
\end{theorem}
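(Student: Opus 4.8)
The plan is to bypass any gradient-flow structure and instead differentiate the relative entropy of the \emph{joint} law directly, using the Liouville (continuity) equation. Write \eqref{eq:CTSVGD} as $\dot{\ux} = b(\ux)$ on $(\mathbb{R}^d)^N$, where $b=(b_1,\dots,b_N)$ and $b_i(\ux) = -\frac1N\sum_j \ker(x_i,x_j)\nabla V(x_j) + \frac1N\sum_j\nabla_2\ker(x_i,x_j)$. By Lemma~\ref{lem:density}, $p^N(t,\cdot)$ is $\mathcal{C}^2$ and satisfies $\partial_t p^N + \nabla\cdot(b\,p^N) = 0$. Differentiating $t \mapsto \KL(p^N(t) || \pi^{\otimes N})$, substituting this equation, and integrating by parts (using $\int \partial_t p^N\,\d\uz = 0$, $\int b\cdot\nabla p^N\,\d\uz = -\int(\nabla\cdot b)p^N\,\d\uz$, and $\nabla\log\pi^{\otimes N}(\uz) = -(\nabla V(z_1),\dots,\nabla V(z_N))$), one obtains
\[
\frac{\d}{\d t}\KL(p^N(t) || \pi^{\otimes N}) \;=\; \E\Big[\sum_{i=1}^N\big(b_i(\ux)\cdot\nabla V(x_i) - \nabla_{x_i}\cdot b_i(\ux)\big)\Big],
\]
the expectation being under $p^N(t,\cdot)$.

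The crux is to identify the right-hand side. Expanding the double sums in $\sum_i b_i\cdot\nabla V(x_i)$ and in $\sum_i\nabla_{x_i}\cdot b_i$, all terms with $j\neq i$, together with the ``naive'' $j = i$ terms obtained by differentiating only the first slot of $\ker(x_i,x_i)$, reassemble---via the closed form \eqref{KSD2} for $\KSD^2(\mu^N(t) || \pi)$ with $\mu^N(t) = \frac1N\sum_i\delta_{x_i}$, whose self-product expectation includes the diagonal $X=Y$---into exactly $-N\,\KSD^2(\mu^N(t) || \pi)$. The remaining point is that, in $\nabla_{x_i}\cdot b_i$, the genuine $j=i$ contribution differs from this naive one: since $\ker(x_i,x_i)\nabla V(x_i)$ and $\nabla_2\ker(x_i,x_i)$ depend on $x_i$ through \emph{both} slots, the chain rule produces the total derivatives $\nabla_1\ker(z,z)+\nabla_2\ker(z,z)$, $\ker(z,z)\Delta V(z)$, and $\nabla_1\cdot\nabla_2\ker(z,z)+\Delta_2\ker(z,z)$, whereas the $\KSD^2$ formula only carries $\nabla_1\cdot\nabla_2\ker(z,z)$ on the diagonal. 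The mismatch is precisely $-\tfrac1N\sum_k C^*(x_k(t))$ with $C^*$ as in \eqref{eq:cstar}, yielding the key identity
\[
\frac{\d}{\d t}\KL(p^N(t) || \pi^{\otimes N}) \;=\; -N\,\E\big[\KSD^2(\mu^N(t) || \pi)\big] \;+\; \frac1N\,\E\Big[\sum_{k=1}^N C^*(x_k(t))\Big].
\]

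Integrating this on $[0,T]$ and discarding $\KL(p^N(T) || \pi^{\otimes N})\ge 0$, then dividing by $NT$, gives the first displayed bound; discarding instead the nonpositive $-N\int_0^T\E[\KSD^2]\,\d t$ gives the second. For \eqref{Kbd}, set $T=N$, bound $\sum_k C^*(x_k(t)) \le N C^*$, and estimate $\KL(p^N(0) || \pi^{\otimes N})/N^2 \le N^{-1}\sup_L \KL(p^L(0) || \pi^{\otimes L})/L$ (finite by the $\limsup$ hypothesis). The leftmost inequality follows since $P \mapsto \KSD(P || \pi)$ is convex (a supremum of linear functionals of $P$), so $\KSD(\mu_{av}^N || \pi) \le \frac1N\int_0^N\KSD(\mu^N(t) || \pi)\,\d t$ pathwise, after which Jensen's inequality applied to $\E$ and to the time-average against $\d t/N$ gives $(\E[\KSD(\mu_{av}^N || \pi)])^2 \le \frac1N\int_0^N\E[\KSD^2(\mu^N(t) || \pi)]\,\d t$.

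The step I expect to be the real obstacle is the analytic justification of the entropy differentiation and the integrations by parts: the field $b$ is neither bounded nor globally Lipschitz, so one must ensure enough decay of $p^N(t,\cdot)$ and $\nabla p^N(t,\cdot)$ at infinity for the boundary terms to vanish, and one must verify that $\KL(p^N(t) || \pi^{\otimes N})$ stays finite along the flow---which follows a posteriori from the identity once $C^*$ is controlled, but requires a truncation/approximation argument to be made rigorous. The diagonal bookkeeping in the second step, though elementary, is delicate and is exactly where the correction $C^*$ originates.
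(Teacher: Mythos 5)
Your proposal is correct and follows essentially the same route as the paper: differentiate $\KL(p^N(t)\,\|\,\pi^{\otimes N})$ via the $N$-body Liouville equation and integration by parts, reassemble the off-diagonal and naive diagonal terms into $-N\,\E[\KSD^2(\mu^N(t)\,\|\,\pi)]$ using \eqref{KSD2}, identify the diagonal chain-rule mismatch as $\frac1N\sum_k C^*(x_k(t))$ exactly as in \eqref{entbd}, and conclude by integrating in time and applying convexity of $\KSD$ with Jensen's inequality. The analytic caveat you raise is handled in the paper only through Lemma~\ref{lem:density} and the weak form of the Liouville equation, so your bookkeeping matches the published argument.
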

%Theorem~\ref{thm:KSDrates} is proved in Appendix~\ref{subsec:KSDrates_proof}. 

\begin{remark}\label{rem1}
    The condition $C^* < \infty$ holds, for example, when $\ker(u,v) = \Psi(u-v)$ for a positive-definite $\mathcal{C}^3$ function $\Psi:\mathbb{R}^d\to\mathbb{R}$~\citep{bochner1933monotone}, and $\sup_{x\in\mathbb{R}^d}\Delta V(x) < \infty$. Examples of such kernels include the radial basis kernel (e.g., Gaussian) and a wide class of Mat\'ern kernels. The condition on the potential allows for a large class of non-log-concave densities as well. The condition $\underset{N \to \infty}{\limsup}~\KL (p^N(0) ||\pi^{\otimes N})/N < \infty$ holds, for example, if we set the law of $\underline{x}^N(0) = (x^N_1(0), \dots, x^N_N(0))$ to be $\mu_{\circ}^{\otimes N}$, where $\mu_{\circ}$ is any probability measure on $\mathbb{R}^d$ satisfying $\KL(\mu_{\circ} || \pi)< \infty$.
\end{remark}

\begin{remark}[Optimality and dimension dependence]\label{optrem}
    According to~\cite{sriperumbudur2016optimal,hagrass2024minimax}, we have under mild regularity conditions, that the empirical measure $P_N\coloneqq \frac{1}{N} \sum_{i=1}^N \delta_{X_i}$, where $X_i \sim P$, i.i.d., satisfies $\E [\KSD(P_N||P)]=O(1/\sqrt{N})$. This points to the fact that our rates in Theorem~\ref{thm:KSDrates} are presumably optimal with respect to $N$. While there is no curse-of-dimensionality in the $\KSD$ rates, the dimension factor appears in the numerator of the bound \eqref{Kbd}. When $\ker(u,v) = \Psi(u-v)$ as in Remark \ref{rem1} with $\sup_{x\in\mathbb{R}^d}\Delta V(x) \le C d$ for some dimension independent constant $C$, it can be checked that $C^* \le \Psi(0)Cd - \Delta \Psi(0)$, which gives a linear in $d$ upper bound on $C^*$ for a wide range of kernels (including the Gaussian kernel) and potentials. Moreover, as long as mild regularity conditions are assumed about the kernel and the potential function, then  according to \citet[Lemma 1]{vempala2019rapid}, the initialization dependent term could be taken to be linear in $d$ when $V$ has Lipschitz gradients. These combine to give an $O(d/\sqrt{N})$ bound on the $\KSD$.
\end{remark}

\begin{proof}[Proof of Theorem~\ref{thm:KSDrates}]%\label{subsec:KSDrates_proof}
We will abbreviate $\Hent(t) \coloneqq \KL( p^N(t) || \pi^{\otimes N})$. Using the particle dynamics \eqref{eq:CTSVGD} and integration by parts, it is easy to verify that $p(t,\uz)$ is a weak solution of the following $N$-body Liouville equation (see, for example, \citet[Pg. 7]{golse2013empirical} and \citet[Chapter 8]{ambrosio2005gradient}) given by
\begin{align*}
    \partial_t p(t,\uz) + \frac{1}{N} \sum_{k,\ell=1}^N \text{div}_{z_k} (p(t,\uz)\Phi(z_k,z_\ell)) = 0, 
\end{align*}
where $\Phi(z,w)\coloneqq -\ker(z,w) \nabla V(w) + \nabla_2 \ker(z,w)$. Recalling~\eqref{def:KL}, and using the density regularity obtained in Lemma \ref{lem:density}, we have that
\begin{align*}
   \Hent'(t) & = \int \partial_t  p(t,\uz) \d \uz + \int \log \left(\frac{p(t,\uz )}{\pi^{\otimes N}(\uz)} \right) \partial_t  p(t,\uz ) \d \uz\\
   &= - \int \frac{1}{N} \sum_{k,\ell} \log \left( \frac{p(t,\uz )}{\pi^{\otimes N}(\uz)}\right) \text{div}_{z_k} (p(t,\uz)\Phi(z_k,z_\ell)) \d \uz\\
    &=\frac{1}{N} \sum_{k,\ell} \int \nabla_{z_k}\log \left( \frac{p(t,\uz )}{\pi^{\otimes N}(\uz)}\right) \cdot (p(t,\uz)\Phi(z_k,z_\ell))\d \uz\\
    &=\frac{1}{N} \sum_{k,\ell} \int\nabla_{z_k}  p(t,\uz) \cdot\Phi(z_k,z_\ell) \d \uz + \frac{1}{N} \sum_{k,\ell} \int  \nabla V(z_k) \cdot \Phi(z_k,z_\ell)p(t,\uz) \d \uz\\
   &=\frac{1}{N} \sum_{k,\ell} \int  \left( - \text{div}_{z_k}\Phi(z_k,z_\ell) + \nabla V(z_k)\cdot \Phi(z_k,z_\ell)  \right) p(t,\uz) \d \uz.
\end{align*}
Now, observe that
\begin{align*}
     - \text{div}_{z_k}\Phi(z_k,z_\ell) & =  \text{div}_{z_k} (\ker (z_k,z_\ell)\nabla V(z_\ell)) - \text{div}_{z_k} (\nabla_2\ker(z_k,z_\ell))\\
     & = \nabla_1\ker(z_k,z_\ell)\cdot \nabla V(z_\ell) - \nabla_1 \cdot \nabla_2 \ker(z_k,z_\ell) + C^*(z_k) \mathbbm{1}_{\{k=\ell\}}.  %\\ & \quad +(\nabla_2\ker(z_k,z_k)\cdot \nabla V(z_k) +  \ker(z_k,z_\ell)\Delta V(z_k) - \Delta_2 \ker(z_k,z_\ell))\mathbbm{1}_{\{k=\ell\}}.
\end{align*}
Similarly,
\begin{align*}
 \nabla V(z_k)\cdot \Phi(z_k,z_\ell)  = - \nabla V(z_k) \cdot (\ker(z_k,z_\ell)\nabla V(z_\ell)) + \nabla V(z_k) \cdot \nabla_2 \ker(z_k,z_\ell).
\end{align*}
Therefore, using the explicit form of $\KSD$ in \eqref{KSD2}, we have
\begin{align*}
    &\sum_{k,\ell} \left(- \text{div}_{z_k}\Phi(z_k,z_\ell)   + \nabla V(z_k)\cdot\Phi(z_k,z_\ell)\right)
= - N^2 \KSD^2(\mu(\uz) || \pi) + \sum_{k}C^*(z_k),
\end{align*}
where $\mu(\uz) \coloneqq \frac{1}{N}\sum_{i=1}^N \delta_{z_i}$. Hence, we have
\begin{align}\label{entbd}
  \Hent'(t) & 
  % = - N \int \KSD^2(\mu(\uz)|| \pi)p(t,\uz) \d \uz + \frac{1}{N}\E\left[\sum_{k}C^*\left(x_k(t)\right)\right]\notag\\
   = - N \E[\KSD^2( \mu^N(t)|| \pi)] + \frac{1}{N}\E\Big[\sum_{k}C^*\left(x_k(t)\right)\Big],
\end{align}
where we recall that $\mu^N(t) = \frac{1}{N} \sum_{i=1}^N \delta_{x_i(t)}$ is the empirical measure. Hence, we have
\begin{align*}
    \frac{1}{T}\int_0^T \E[\KSD^2( \mu^N(t) || \pi )] \d t \leq \frac{\Hent(0)}{NT} + \frac{1}{N^2T}\int_0^T\E\left[\sum_{k}C^*\left(x_k(t)\right)\right]\d t,
\end{align*}
which completes the first claim. The entropy bound follows from \eqref{entbd}.

To prove the final claim, recall
$
    \mu^N_{av}(\d x) \coloneqq \frac{1}{N} \int_0^N \mu^N(t, \d x) \d t 
$
and note that the map $Q \mapsto \KSD(Q || \pi)$ is convex, which follows immediately from the representation of $\KSD$ given in \eqref{KSD1}. From this and repeated applications of Jensen's inequality, we obtain
\begin{align*}
     \E[\KSD( \mu^N_{av}|| \pi)] &\leq \frac{1}{N} \int_0^N \E [\KSD(\mu^N(t)|| \pi)] \d t \leq  \frac{1}{N} \int_0^N \sqrt{ \E [\KSD^2(\mu^N(t)|| \pi)]} \d t \\
&\leq \left( \frac{1}{N} \int_0^N  \E [\KSD^2(\mu^N(t)|| \pi)] \d t   \right)^{1/2}\leq \frac{\left(\sup_{L} \frac{\Hent(0)}{L} + C^*\right)^{\frac{1}{2}}}{\sqrt{N}}.
\end{align*}
%where $C = \sup_{N} \frac{H^N(0)}{N} + C^*.$
This completes the proof of the theorem.
\end{proof}

%\subsection{Marginal convergence.}
We now address the convergence in law of the time-averaged marginals of a single particle when the initial particle locations are drawn from an exchangeable law. We defer its proof to Appendix~\ref{subsec:marginal_rates}.

\begin{theorem}\label{thm:marginalrates}
    Suppose Assumption~\ref{regularity} holds, $C^*< \infty$ and let $\ker(u,v) = \Psi(x-y)$, where $\Psi$ is a $\mathcal{C}^3$ function with non-vanishing generalized Fourier transform. Suppose also that the law $p_0$ of the initial particle locations $(x_1(0), \ldots, x_N(0))$ is exchangeable for each $N \in \mathbb{N}$ and $ \underset{N \to \infty}{\limsup}\frac{1}{N}\KL (p^N(0) ||\pi^{\otimes N}) < \infty$. %Further assume that there exists $\delta > 0$ such that $\int e^{\delta |z|} \pi(\d z) < \infty$. 
    Define 
    $$
   \bar\mu^N(A)\coloneqq \frac{1}{N} \int_0^N \mathbb{P}(x_1(t) \in A) \d t,~~\text{for}~~A \in \mathcal{B}(\mathbb{R}^d).
    $$
    Then, $\bar\mu^N \to \pi$, weakly.
\end{theorem}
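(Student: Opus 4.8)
The goal is to show $\bar\mu^N \to \pi$ weakly. The strategy has two components: (1) establish tightness of the family $\{\bar\mu^N\}_N$, and (2) identify $\pi$ as the unique weak limit point using the KSD bound from Theorem \ref{thm:KSDrates}.

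For tightness, I would exploit the entropy bound in Theorem \ref{thm:KSDrates}, namely $\KL(p^N(T)\|\pi^{\otimes N}) \le \KL(p^N(0)\|\pi^{\otimes N}) + \frac{1}{N}\int_0^T \E[\sum_k C^*(x_k(t))]\,dt \le \KL(p^N(0)\|\pi^{\otimes N}) + C^* T$ when $C^* < \infty$. Averaging over $[0,N]$ and using exchangeability, the one-particle marginal relative entropy satisfies a bound of the form $\KL(\mathrm{Law}(x_1(t))\|\pi) \le \frac{1}{N}\KL(p^N(t)\|\pi^{\otimes N})$ by subadditivity of relative entropy under the product reference measure (chain rule / tensorization). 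Integrating in $t$ over $[0,N]$, convexity of KL, and the hypothesis $\limsup_N \frac{1}{N}\KL(p^N(0)\|\pi^{\otimes N}) < \infty$ give a uniform-in-$N$ bound $\KL(\bar\mu^N\|\pi) \le C$. Since $\pi$ has finite mass and sublevel sets of relative entropy against a fixed probability measure are tight (Donsker–Varadhan / de la Vallée Poussin together with the fact that $\pi$ itself is tight), the family $\{\bar\mu^N\}$ is tight.

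For identification of the limit, take any weakly convergent subsequence $\bar\mu^{N_j} \to \nu$. From \eqref{Kbd} in Theorem \ref{thm:KSDrates}, $\E[\KSD(\mu^N_{av}\|\pi)] \to 0$, and since $\bar\mu^N$ is precisely the expectation (intensity measure) of the random measure $\mu^N_{av}$, Jensen's inequality applied to the convex map $Q \mapsto \KSD(Q\|\pi)$ gives $\KSD(\bar\mu^N\|\pi) \le \E[\KSD(\mu^N_{av}\|\pi)] \to 0$. So $\KSD(\bar\mu^N\|\pi) \to 0$. Now I would invoke the convergence-determining property of KSD: under the stated hypothesis that $\ker(u,v)=\Psi(u-v)$ with $\Psi \in \mathcal{C}^3$ having non-vanishing generalized Fourier transform, the results of \cite{gorham2017measuring} (combined with the tightness already established, which rules out escape of mass) imply that $\KSD(\bar\mu^N\|\pi)\to 0$ forces $\bar\mu^N \to \pi$ weakly. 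Concretely, for the weak limit $\nu$ one shows $\KSD(\nu\|\pi) = 0$ by lower semicontinuity of KSD under weak convergence (using that the Stein kernel is continuous and the test functions in \eqref{KSD1} are fixed), and then the non-degeneracy of $\Psi$ ensures $\KSD(\nu\|\pi)=0 \Rightarrow \nu = \pi$. Since every subsequential limit equals $\pi$ and the family is tight, $\bar\mu^N \to \pi$.

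The main obstacle I anticipate is making the lower semicontinuity / convergence-determining step fully rigorous: KSD is a supremum over an infinite-dimensional ball in $\HH$, and passing weak limits through this supremum requires care about uniform integrability of the Stein operator $\mathcal{T}_\pi\phi$, which involves $\nabla V$ and can be unbounded. This is exactly where the tightness bound and possibly a uniform moment bound (derivable from the finite relative-entropy bound via $\pi$'s own integrability, since $\int V\,d\pi < \infty$ under mild assumptions and entropy controls $\int V\,d\bar\mu^N$) must be combined with \cite[Theorem 8]{gorham2017measuring}-type arguments to legitimately conclude $\nu = \pi$. A secondary technical point is justifying the tensorization inequality $\KL(\mathrm{Law}(x_1(t))\|\pi)\le \frac1N\KL(p^N(t)\|\pi^{\otimes N})$ for the exchangeable (non-independent) joint law, which follows from the chain rule for relative entropy plus exchangeability but should be stated carefully.
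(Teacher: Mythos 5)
Your proposal is correct and follows essentially the same route as the paper: tightness from the entropy subadditivity bound $\KL(\mathcal{L}(x_1(t))\,||\,\pi)\le \frac1N\KL(p^N(t)\,||\,\pi^{\otimes N})$ together with the Donsker--Varadhan variational representation, then $\KSD(\bar\mu^N\,||\,\pi)\to 0$ via convexity of $\KSD$, exchangeability (so $\bar\mu^N$ is the intensity of the time-averaged empirical measure), and Theorem~\ref{thm:KSDrates}, and finally the convergence-determining property of $\KSD$ for translation-invariant kernels with non-vanishing generalized Fourier transform. The lower-semicontinuity/uniform-integrability obstacle you anticipate is not needed: \citet[Theorem 7]{gorham2017measuring} already states that for such kernels $\KSD\to 0$ plus tightness implies weak convergence to $\pi$, which is exactly how the paper concludes.
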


%\textcolor{red}{Tightness: Let $K$ be a compact subset of $\mathbb{R}^d$ such that $\pi(K^c) \leq \bar{\epsilon}$. Then 
%$$
%\mathbb{P}(x_1(t) \notin K) \leq \delta [\KL + \log\mathbb{E}\exp^{(1/\delta)\mathbbm{1}_{K^c}} ]
%$$
%Picking $\delta$ such that $\delta \KL \leq \epsilon/2$, the rhs above is bounded by
%$$
%\frac{\epsilon}{2}+\delta \log (1+\bar{\epsilon}(\exp^{1/\delta}-1)) \to 0 ~~~~\text{as}~~~~ \bar{\epsilon}\to 0.
%$$
%}

\section{Discrete-time Finite-Particle Rates in \texorpdfstring{$\KSD$}{KSD} metric}

In this section, we obtain $\KSD$ rates for the discrete-time SVGD dynamics given by \eqref{eq:DTSVGD}. The dynamics can be succinctly represented as
\begin{align*}
\underline{x}(n+1) = \underline{x}(n) - \eta \mathbf{T}(\underline{x}(n)), \ n \in \mathbb{N}_0,
\end{align*}
where $\eta$ is the step-size and $\mathbf{T}= (\mathsf{T}_1,\ldots, \mathsf{T}_N)'$ with $\mathsf{T}_i(\underline{x}) = \frac{1}{N}\sum_j \left[  \ker(x_i, x_j) \nabla V(x_j) -  \nabla_2 \ker(x_i, x_j)\right]$. In this section, we use $T$ to denote the number of iterations. 

Although the idea is once again to track the evolution of the relative entropy of the joint density of particle locations with respect to $\pi^{\otimes N}$, one needs a careful quantification of the discretization error to obtain an equation similar to \eqref{entbd}.
We will make the following assumptions. All constants appearing in the section will be independent of $d,N$. 

\begin{assumption}\label{discass}
We make the following regularity assumptions.
\begin{itemize} 
\item[(a)] \emph{Boundedness}: $\ker$ and all its partial derivatives up to order $2$ are uniformly bounded by $B \in (0, \infty)$.
\item[(b)] \emph{Growth}: $\inf_{z \in \mathbb{R}^d}V(z) >0$ and, for some $A>0$, $\alpha \in [0,1/2)$, $\|\nabla V(x)\| \leq A V(x)^{\alpha}$ for all $x \in \mathbb{R}^d$.
\item[(c)] \emph{Bounded Hessian}: $\sup_{z \in \mathbb{R}^d}\|H_V(z)\|_{op} = C_V < \infty,$ where $H_V$ denotes the Hessian of $V$ and $\|\cdot\|_{op}$ denotes the operator norm.
\item[(d)] Recalling~\eqref{eq:cstar}, we assume that $\sup_{z \in \mathbb{R}^d}\nabla_2 \ker(z,z) \cdot \nabla V(z) + \ker(z,z) \Delta V(z) - \Delta_2 \ker(z,z) =c^*d$ for some constant $c^*>0$.
\item[(e)] Initial entropy bound: $\KL(p(0) || \pi^{\otimes N}) \le C_{KL} N d$ for some constant $C_{KL}>0$.
\end{itemize}
\end{assumption}

\begin{remark}
    Conditions (a) and (c) are essentially motivated by the work of~\cite{korba2020non}, who in turn are motivated by standard assumptions made in the stochastic optimization literature. The condition (b) in Assumption \ref{discass} captures the growth rate of the potential $V$ which plays a crucial role in the step-size selection and number of iterations required (see~\eqref{stepsizechoice} below) to obtain the convergence rate in Theorem~\ref{discrates}. As $\alpha$ approaches $0$, $\pi$ approaches the exponential distribution and as $\alpha$ approaches $1/2$, $\pi$ is close to a Gaussian (in tail behavior). Conditions (d) and (e) are explicit refinements of similar conditions required in the continuous-time analysis. 
\end{remark}

\begin{theorem}\label{discrates}
Suppose Assumption \ref{discass} holds. Let the initial locations $(x_1(0),\ldots, x_N(0))$ be sampled from 
$$
p_K(0) \coloneqq p(0) \vert \mathcal{S}_K \quad \text{where}\quad \mathcal{S}_K \coloneqq \Big\{\underline{x} \in \left(\mathbb{R}^{d}\right)^N : N^{-1}\sum_i V(x_i) \le K\Big\},
$$
for some $K>0$ satisfying $\int_{\mathcal{S}_K} p(0,\underline{z})\d\underline{z} \ge 1/2$, where $p(0) \vert \mathcal{S}_K$ represents the restriction of the density to the set $\mathcal{S}_K$. Then there exist positive constants $a,b$ depending only on the constants appearing in Assumption \ref{discass} such that with
\begin{align}\label{stepsizechoice}
\eta = \frac{a}{d^{\frac{1+\alpha}{2(1-\alpha)}} + \sqrt{d} K^{\alpha} + d}N^{-\frac{1+\alpha}{1-2\alpha}}, \qquad T = N^{\frac{2-\alpha}{1-2\alpha}},
\end{align}
we have
\begin{equation*}
\E_{p_K(0)}\Big[\frac{1}{T}\sum_{n=0}^{T-1}\KSD^2\left(\mu^N_n || \pi\right)\Big] \le  \frac{b d \left(d^{\frac{1+\alpha}{2(1-\alpha)}} + \sqrt{d} K^{\alpha} + d\right)}{N}.
\end{equation*}
In particular, if $(x_1(0),\ldots, x_N(0))$ is sampled from $p(0)$, then for any $\epsilon>0$,
\begin{equation*}
\mathbb{P}\left(\frac{1}{T}\sum_{n=0}^{T-1}\KSD^2\left(\mu^N_n || \pi\right) > \frac{b d \left(d^{\frac{1+\alpha}{2(1-\alpha)}} + \sqrt{d} K^{\alpha} + d\right)}{N\epsilon}\right) \le \epsilon + \int_{\mathcal{S}_K^c}p(0,\underline{z})\d\underline{z}.
\end{equation*}
\end{theorem}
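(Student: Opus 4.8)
The plan is to run the argument behind Theorem~\ref{thm:KSDrates} in discrete time, the additional work being a careful quantification of the one-step discretization error. Write $\Hent(n) \coloneqq \KL(p^N(n) || \pi^{\otimes N})$, let $\mathbf{S}_\eta \coloneqq \mathrm{id} - \eta\mathbf{T}$ so that $p^N(n+1) = (\mathbf{S}_\eta)_\sharp p^N(n)$, and write $\ux = (x_1,\dots,x_N)$ for a generic point of $(\mathbb{R}^d)^N$, with $\E$ denoting expectation with respect to the relevant particle law below. Changing variables in the definition of $\KL$ and using $\log \pi^{\otimes N}(\ux) = -\sum_i V(x_i) + \mathrm{const}$ gives the exact one-step identity
\begin{equation}\label{eq:plan-onestep}
\Hent(n+1) - \Hent(n) = \E\!\left[ \sum_{i=1}^N \big( V(x_i - \eta\mathsf{T}_i(\ux)) - V(x_i) \big) - \log\det\big( I_{Nd} - \eta\,\nabla\mathbf{T}(\ux) \big) \right],
\end{equation}
valid on the event $\eta\,\|\nabla\mathbf{T}(\ux)\|_{op} < 1$. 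First I would Taylor-expand both contributions in $\eta$: by Assumption~\ref{discass}(c), $V(x_i - \eta\mathsf{T}_i(\ux)) - V(x_i) = -\eta\,\nabla V(x_i) \cdot \mathsf{T}_i(\ux) + O\big(\eta^2 C_V \|\mathsf{T}_i(\ux)\|^2\big)$, while a standard bound on the log-determinant gives $-\log\det\big(I_{Nd} - \eta\,\nabla\mathbf{T}(\ux)\big) = \eta \sum_i \text{div}_{x_i}\mathsf{T}_i(\ux) + O\big(\eta^2 \|\nabla\mathbf{T}(\ux)\|_{HS}^2\big)$ as soon as $\eta\,\|\nabla\mathbf{T}(\ux)\|_{HS} \le 1/2$. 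Since $\mathsf{T}_i(\ux) = -\tfrac1N \sum_j \Phi(x_i, x_j)$ with $\Phi$ as in the proof of Theorem~\ref{thm:KSDrates}, the first-order terms reassemble through exactly the algebra used there, namely $\sum_i \big( -\nabla V(x_i)\cdot\mathsf{T}_i(\ux) + \text{div}_{x_i}\mathsf{T}_i(\ux) \big) = -N\,\KSD^2(\mu(\ux) || \pi) + \tfrac1N \sum_k C^*(x_k)$, where $\mu(\ux) = \tfrac1N\sum_i\delta_{x_i}$. Hence, on the event that $\eta\,\|\nabla\mathbf{T}(\ux(m))\|_{HS} \le 1/2$ for every $m \le n$, and bounding $\sum_k C^*(x_k) \le c^* d N$ by Assumption~\ref{discass}(d),
\begin{equation}\label{eq:plan-dissip}
\Hent(n+1) - \Hent(n) \le -\eta N\,\E[\KSD^2(\mu^N_n || \pi)] + \eta\, c^* d + C\eta^2\, \E\!\Big[ C_V \sum_i \|\mathsf{T}_i(\ux(n))\|^2 + \|\nabla\mathbf{T}(\ux(n))\|_{HS}^2 \Big].
\end{equation}

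The step I expect to be the main obstacle is controlling the error term in \eqref{eq:plan-dissip} uniformly over $n \le T$ and verifying the standing condition $\eta\,\|\nabla\mathbf{T}\|_{HS} \le 1/2$ along the trajectory. From Assumptions~\ref{discass}(a)--(c) one gets the pointwise bounds $\sum_i \|\mathsf{T}_i(\ux)\|^2 \lesssim N\big( d + \bar V(\ux)^{2\alpha} \big)$ and $\|\nabla\mathbf{T}(\ux)\|_{HS}^2 \lesssim N d \big( d + \bar V(\ux)^{2\alpha} \big)$, where $\bar V(\ux) \coloneqq N^{-1}\sum_j V(x_j)$ (using $\|\nabla V\| \le A V^\alpha$, boundedness of $\ker$ and its derivatives, $\|H_V\|_{op} \le C_V$, and Jensen's inequality for $u \mapsto u^{2\alpha}$ since $2\alpha \le 1$). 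So everything reduces to an \emph{a priori} bound on $n \mapsto \bar V(\ux(n))$ --- this is the role of Lemma~\ref{apriori}, proved independently via a second-order Taylor expansion of $V$ along \eqref{eq:DTSVGD}: the first-order increment of $\sum_i V(x_i)$ equals $\tfrac1N \sum_{i,j} \nabla V(x_i)\cdot\Phi(x_i,x_j)$, whose dangerous quadratic-in-$\nabla V$ part $-\tfrac1N \sum_{i,j} \ker(x_i,x_j)\langle \nabla V(x_i), \nabla V(x_j)\rangle$ is $\le 0$ by positive definiteness of $\ker$, leaving only a term growing like $\sqrt d\,\bar V(\ux)^\alpha$ with $\alpha < 1$; a discrete comparison argument (with the $O(\eta^2)$ second-order correction absorbed through Assumption~\ref{discass}(c)) then yields $\sup_{0\le n \le T} \bar V(\ux(n)) \le K^*$ with $K^* \lesssim K + (\sqrt d\,\eta T)^{1/(1-\alpha)}$ on the event $\{\ux(0) \in \mathcal{S}_K\}$. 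With the choices in \eqref{stepsizechoice} one checks $\eta T \asymp N\,\big( d^{\frac{1+\alpha}{2(1-\alpha)}} + \sqrt d K^\alpha + d \big)^{-1}$, so $K^*$ is only polynomial in $d$ and $N$; feeding this back shows $\eta\,\|\nabla\mathbf{T}(\ux(n))\|_{HS} \lesssim a \le 1/2$ for all $n \le T$ when $a$ is small, which legitimizes \eqref{eq:plan-onestep}--\eqref{eq:plan-dissip} on this event. The exponent $\tfrac{1+\alpha}{2(1-\alpha)}$ in the step-size denominator is, in fact, precisely calibrated so that the $(K^*)^{2\alpha}$-driven piece of the final error comes out of the right order.

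To conclude, I would sum \eqref{eq:plan-dissip} over $n = 0, \dots, T-1$, drop $-\Hent(T) \le 0$, and use $\Hent(0) = \KL(p_K(0) || \pi^{\otimes N}) \le 2\,\KL(p(0) || \pi^{\otimes N}) + O(1) \le 2 C_{KL} N d + O(1)$ (which follows from $\int_{\mathcal{S}_K} p(0,\uz)\,\d\uz \ge 1/2$ and Assumption~\ref{discass}(e)); dividing by $\eta N T$ yields
\begin{equation*}
\frac1T \sum_{n=0}^{T-1} \E_{p_K(0)}\!\big[ \KSD^2(\mu^N_n || \pi) \big] \ \lesssim\ \frac{\Hent(0)}{\eta N T} + \frac{c^* d}{N} + C\,\eta\,d\,\big( d + (K^*)^{2\alpha} \big).
\end{equation*}
Substituting \eqref{stepsizechoice} and the bound on $K^*$ and tracking the exponents (using $\alpha \in [0,1/2]$ repeatedly, e.g.\ to see that $\eta$ times the $N$-dependent part of $(K^*)^{2\alpha}$ produces a negative power of $N$), each of the three terms is $O\!\big( d\,(d^{\frac{1+\alpha}{2(1-\alpha)}} + \sqrt d K^\alpha + d)/N \big)$ provided $a$ is chosen small and $b$ large, which is the first assertion. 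For the second, Markov's inequality applied to the first one bounds by $\epsilon$ the $p_K(0)$-probability that $\frac1T\sum_{n=0}^{T-1}\KSD^2(\mu^N_n||\pi)$ exceeds $\frac{bd(d^{\frac{1+\alpha}{2(1-\alpha)}}+\sqrt d K^\alpha + d)}{N\epsilon}$; since \eqref{eq:DTSVGD} defines a deterministic map, for any path-space event $\mathcal{B}$ one has $\mathbb{P}_{p(0)}(\mathcal{B}) \le \mathbb{P}_{p(0)}(\mathcal{B} \cap \mathcal{S}_K) + \mathbb{P}_{p(0)}(\mathcal{S}_K^c) \le \mathbb{P}_{p_K(0)}(\mathcal{B}) + \int_{\mathcal{S}_K^c} p(0,\uz)\,\d\uz$, giving the claimed probability bound. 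The genuinely delicate ingredients are the sign cancellation from positive definiteness of $\ker$ in Lemma~\ref{apriori} and the calibration of $\eta$ and $T$ which, together with the sub-linear growth exponent $\alpha < 1$, keeps $\bar V(\ux(n))$ --- and hence the per-step Jacobian and drift norms --- only polynomially large over the $T \asymp N^{2/(1-\alpha)}$ iterations, so that the discretization error stays negligible against the negative $\KSD^2$ term; the rest is a discrete-time transcription of the proof of Theorem~\ref{thm:KSDrates}.
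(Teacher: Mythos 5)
Your proposal is correct and follows essentially the same route as the paper's proof: the one-step change-of-variables identity you Taylor-expand directly is exactly the Korba-style interpolation expansion the paper uses, the first-order term is reassembled into $-N\,\E[\KSD^2(\mu^N_n||\pi)] + c^*d$ just as in Theorem~\ref{thm:KSDrates}, the second-order remainder is controlled through the same a priori bound of Lemma~\ref{apriori} (positive-definiteness cancelling the quadratic $\nabla V$ term) feeding into the $\|\mathbf{T}\|^2$ and $\|J\mathbf{T}\|_{HS}$ bounds of Lemmas~\ref{tup}--\ref{hsbound}, and the summation, initial-entropy bound, and Markov-plus-restriction step match the paper's conclusion. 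The only differences are presentational (direct expansion of $V$ and $\log\det$ rather than the interpolated entropy $E(t)$, and an additive rather than multiplicative form of the a priori bound), neither of which changes the argument or the resulting rates.
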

We prove Theorem~\ref{discrates} in Appendix~\ref{subsec:proof_of_discrates}. 
%\begin{remark}\label{disciter}
%\end{remark}

\begin{remark}
The proof of Theorem~\ref{discrates} involves an `interpolation' in the spirit of \cite{korba2020non} between the laws of $\underline{x}(n)$ and $\underline{x}(n+1)$ which leads to a Taylor expansion of the relative entropy. The main subtlety in the analysis when dealing with the joint law evolution here comes from the fact that, unlike for the population limit SVGD, the Hilbert-Schmidt norm of the Jacobian of the map $\mathbf{T}(\underline{x})$ at $\ux = \underline{x}(n)$ is not uniformly bounded in $n$ and $\underline{x}(0)$. This requires refined `a priori' bounds on the rate of growth of the path functional $n \mapsto \frac{1}{N}\sum_iV(x_i(n))$ (see Lemma \ref{apriori}). As a result, for initial locations in $\mathcal{S}_K$ one can fine tune the step-size depending on $N,d,K$ such that the second order term in the Taylor expansion becomes small in comparison to the first term. Moreover, the first order term is shown to have the same form as the continuous-time derivative of the joint relative entropy obtained in the proof of Theorem~\ref{thm:KSDrates}. This leads to the results in Theorem~\ref{discrates}. 

Marginal convergence results, similar to Theorem \ref{thm:marginalrates}, in the discrete-time setting can also be deduced from the entropy bounds involved in proving Theorem~\ref{discrates}.
\end{remark}

\section{Continuous-time Finite-Particle Rates in \texorpdfstring{$W_2$}{W2} Metric}\label{sec:Wass}

We now explore convergence rates for the continuous-time SVGD in the $L^2$-Wasserstein metric. For $s > 0$, let $\mathcal{P}_s$ be the set of all Borel measurable probability measures on $\mathbb{R}^d$ with finite $s$-moment. For two measures $\mu,\nu \in \mathcal{P}_s$, the $L^s$-Wasserstein distance (based on the Euclidean distance) is defined as 
\begin{align*}
    \mathsf{W}_s(\mu,\nu) \coloneqq \left(\underset{\pi \in \Pi(\mu,\nu)}{\inf}~\E_{(X,Y)\sim \pi} \left[ \| X-Y\|_2^s \right] \right)^{1/s},
\end{align*}
where $\Pi(\mu,\nu)$ denotes the set of all possible couplings of the probability measures $\mu$ and $\nu$.

To address $W_2$ convergence, we consider the SVGD dynamics in~\eqref{eq:CTSVGD} with kernels of the form
\begin{align}\label{eq:specifickernel}
   \tilde \ker(u,v) = 1+\langle u,v\rangle + \Psi(u-v).
\end{align}
We further assume that the kernel obtained by $(u,v) \mapsto \Psi(u-v)$ is positive-definite, $\Psi(z) = \Psi(-z)$ for all $z\in\mathbb{R}^d$, $\sup_{z\in\mathbb{R}^d} \|\nabla\Psi(z)\| < \infty$, $\Psi \in \mathcal{C}^3$ and has a non-vanishing and continuous generalized Fourier transform. A classical example of a kernel that satisfies these assumptions is the Mat\'ern kernel. We first state the following dissipativity and growth assumptions from~\cite{kanagawa2022controlling} along with an additional Laplacian growth condition.
\begin{assumption}\label{assp:disgro}
The following conditions hold:
\begin{itemize}
    \item[(a)] Dissipativity: The potential $V$ satisfies $-\langle x, \nabla V(x) \rangle \leq - \alpha \|x\|^2 + \beta_1\|x\| + (\beta_0-d),$
    for some $\alpha >0$ and $\beta_1,\beta_0\geq 0$.
    \item[(b)] Growth: $\|\nabla V(x)\| \leq \lambda_b (1+ \|x\|)$, for some $\lambda_b>0$.
    \item[(c)] $\sup_{z \in \mathbb{R}^d}\Delta V(z) < \infty$.
\end{itemize}
\end{assumption}
The above assumptions are widely used in the MCMC literature (e.g.,~\cite{raginsky2017non}) and are satisfied in many cases including certain Gaussian mixture models, and allow for some degree of non-log-concavity in $\pi$.

\begin{theorem}\label{thm:w2asymp}
Let Assumptions~\ref{regularity} and \ref{assp:disgro} hold. Assume that the initialization is such that $$\underset{N \to \infty}{\limsup}N^{-1}\KL (p(0) ||\pi^{\otimes N}) < \infty.$$ Fix any $\sigma>0$ and let $M = M(N) \coloneqq \lceil N^{2+\sigma} \rceil$. Then, there exists a constant $C_0>0$ such that 
\begin{align*}
    \E[\KSD(\mu_{av}^M || \pi)] \leq \frac{C_0}{N^{1+\sigma/2}},~~\forall N\geq 1 \qquad\text{and}\qquad \mathsf{W}_2(\mu_{av}^M,\pi) \stackrel{a.s}{\to}  0,~\text{as}~N\to \infty,
\end{align*}
where recall $\mu_{av}^M (\d x)\coloneqq \frac{1}{M}\int_0^M \mu^M(t,\d x) \d t$.
\end{theorem}

Theorem~\ref{thm:w2asymp} is proven in Appendix~\ref{subsec:w2asymp}. By the results in \citet[Section 3.2]{kanagawa2022controlling}, we can translate the $\KSD$ bound in Theorem \ref{thm:w2asymp} into a bound in the $\mathsf{W}_2$ metric. While a more abstract result for the choice of kernel in~\eqref{eq:specifickernel} could be obtained by leveraging~\citet[Theorem 3.2]{kanagawa2022controlling}, for the sake of concreteness, we restrict ourselves to the Mat\'ern-family of kernels. Specifically we consider~\eqref{eq:specifickernel} with 
\begin{align}\label{eq:maternkernel}
    \matker(u,v)\coloneqq 1+\langle u,v\rangle + \underbrace{\frac{2^{1-(d/2+\nu)}}{\Gamma(d/2+\nu)}\|\Sigma (u-v)\|_{2}^{\nu}K_{-\nu}\bigl(\|\Sigma (u-v)\|_{2}\bigr)}_{\coloneqq \Psi_{\text{mk}}(u-v)}, 
\end{align}
where $\Gamma$ is the Gamma function, 
$\Sigma$ a strictly positive definite matrix, 
and $K_{-\nu}$ the modified Bessel function of the second kind of order $-\nu$. 

To proceed, we also require the following assumption  from~\cite{kanagawa2022controlling}, which is motivated by the Langevin diffusion:
\begin{align}\label{eq:LD}
    \d Z_t = -\nabla V(Z_t) \d t + \sqrt{2}\d B_t,
\end{align}
where $(B_t)_{t\geq 0}$ is a $d$-dimensional Brownian motion. Note that~\eqref{eq:LD} is the stochastic differential equation equivalent of the Wasserstein gradient flow in~\eqref{conteq}; see, for example,~\cite{jordan1998variational,bakry2014analysis} for details. The connection between the two perspectives has in particular proved to be extremely useful for analyzing both Markov Chain Monte Carlo algorithms and particle-based methods.  

\begin{assumption}\label{assm:ass4}
For $s\in\{1,2\}$, let $\rho_s:[0,\infty) \to [0,\infty)$ be an upper bounding function for the $L^s$-Wasserstein distance in the following sense: 
\begin{align}
    \mathsf{W}_s(\mathcal{L}(Z_t^x), \mathcal{L}(Z_t^y)) \leq \rho_s(t) \|x-y\|_2,~\forall x,y\in\mathbb{R}^d, t\geq 0,
\end{align}
where $Z_t^x$ and $Z_t^x$ are Langevin diffusion processes in~\eqref{eq:LD} with initializations $Z_0=x$ and $Z_0=y$.  %Furthermore, define
%\begin{align*}
%    \qquad\text{and}\qquad \tilde{\rho}_2(t) = \frac{\log \frac{\rho_1(t)}{\rho_1(0)} - \log\rho_2(t)}{\log \frac{\rho_1(t)}{\rho_1(0)}}.
%\end{align*}
Assume that $\tilde{\rho}(t) \coloneqq \frac{\log \frac{\rho_1(t)}{\rho_1(0)} - \log \rho_2(t)}{\log \frac{\rho_1(t)}{\rho_1(0)}}$ is uniformly bounded in $t$ and, moreover,
\begin{align*}
    \int_0^\infty \rho_1(t)\left\{ 1+\sqrt{\rho_1(t)} \, \tilde{\rho}(t) \right\} \d t < \infty.
\end{align*}
\end{assumption}

\begin{remark}
Suppose there exist $U>0$ and $R,L \ge 0$ such that the potential $V$ satisfies
\begin{equation}\label{pot}
\frac{\langle\nabla V(x) - \nabla V(y), x - y\rangle}{\|x-y\|_2^2} \le
\left\{
	\begin{array}{ll}
		-U  & \mbox{if } \|x-y\|_2 > R, \\
		L  & \mbox{if } \|x-y\|_2 \le R.
	\end{array}
\right.
\end{equation}
Then, by \cite{eberle2011reflection,eberle2016reflection}, there exists $c,c_1>0$ such that we can set $\rho_1(t) = ce^{-c_1 t}, \ t \ge 0$. Moreover, using \eqref{pot} and Gr\"onwall's lemma, we can set $\rho_2(t) = e^{L t}, \ t \ge 0$. Consequently, $\tilde{\rho}(t) = \frac{L + c_1}{c_1}$ and hence Assumption \ref{assm:ass4} holds.
\end{remark}

%%%q_m=0%%%%

\begin{theorem}\label{thm:w2rates}
Consider the SVGD updates in~\eqref{eq:CTSVGD} with the kernel in ~\eqref{eq:maternkernel}. Suppose Assumption~\ref{assm:ass4} and the assumptions made in Theorem~\ref{thm:w2asymp} are satisfied. Then, with $\sigma, M$ as in Theorem~\ref{thm:w2asymp}, there exists a constant $C(d)>0$ such that for any $\epsilon\in(0,1)$, we have
\begin{align*}
    \mathbb{P}\left[ \mathsf{W}_2(\mu_{av}^M , \pi) \geq C(d) \left(\frac{C_0}{\epsilon N^{1+\sigma/2}}\right)^{r(d)} \right] \leq \epsilon,\qquad \forall N \geq \left(\frac{C_0}{\epsilon}\right)^{\frac{2}{2+\sigma}}, 
\end{align*}
where $C_0$ is the same constant from Theorem~\ref{thm:w2asymp}, $C(d)$ is the constant $C_{P,d}(1)$ from~\citet[Theorem 3.5]{kanagawa2022controlling}, and 
\begin{align}\label{eq:rdexpression}
    r(d) \coloneqq \frac{1}{3(\frac{4d+1}{d})} \cdot \frac{1}{\frac{3d}{2} + \frac{11}{6} +\left[\frac{d+1}{d}+\frac{5}{3}\right]\nu}.
\end{align}    
\end{theorem}
We prove in Theorem~\ref{thm:w2rates} in Appendix~\ref{subsec:Proof_Of_w2rates}.
\begin{remark}
    Note that $r(d) \approx \frac{1}{18 d}$ for large $d$. Hence, unlike the $\KSD$ rates in Theorem~\ref{thm:w2rates}, the $\mathsf{W}_2$ rates have a curse-of-dimensionality. However, this is expected, as even in the case of i.i.d. samples, we have a similar curse-of-dimensionality~\citep{dudley1969speed,weed2019sharp}. Intuitively, this can be understood by observing that convergence in $\KSD$ captures convergence of expectations for a class of test functions that is much smaller than that for Wasserstein convergence (see \cite{gorham2017measuring}). The latter class is large enough to be highly sensitive to the effect of growing dimension, thereby exhibiting the curse-of-dimensionality.
\end{remark}

%\begin{remark}
%    \textcolor{red}{Recover linear-kernel Gaussian target result. Find $C^*$ from the Theorem 1}
%\end{remark}

\section{Propagation of Chaos}\label{sec:poc} We now exhibit a \emph{long-time propagation of chaos} (POC) for the particle system started from an exchangeable initial configuration and driven by the dynamics~\eqref{eq:CTSVGD}. More precisely, we show in the following proposition that, under the conditions of Theorem~\ref{thm:w2asymp}, the time-averaged marginals of particle locations over the time interval $[0,N]$ become asymptotically independent, with distribution $\pi$, as $N \to \infty$. This result shows in particular that, unlike traditional MCMC schemes~\citep{handbookmcmc}, the SVGD algorithm provides multiple i.i.d. approximate samples from the target distribution $\pi$.
We defer its proof to Appendix~\ref{subsec:POC}.

\begin{proposition}\label{POC}
    Suppose that the law $p^N_0$ of the initial particle locations $(x^N_1(0), \ldots, x^N_N(0))$ is exchangeable for each $N \in \mathbb{N}$. For $1\leq k\leq N$, define the $k$-dimensional marginal of the time-averaged occupancy measure of particle locations as follows:
    $$
    \bar\mu^N_k(A_1, \ldots, A_k)\coloneqq \frac{1}{N} \int_0^N \mathbb{P}(x^N_1(t) \in A_1,\ldots, x^N_k(t) \in A_k) \d t,~~\text{for}~~A_1, \ldots, A_k \in \mathcal{B}(\mathbb{R}^d). 
    $$
Recall $M = M(N) \coloneqq \lceil N^{2+\eta} \rceil$. Under the same setting as Theorem~\ref{thm:w2asymp}, we have that, for any fixed $k \in \mathbb{N}$, $\mathsf{W}_1(\bar\mu^M_k, \pi^{\otimes k}) \stackrel{a.s}{\to} 0$, as $N\to\infty$. 
\end{proposition}

\begin{remark}
    This result should be compared with \citet[Theorem 2]{shi2024finite} and \citet[Proposition 2.6]{lu2019scaling} where finite-time POC results are shown: the particle marginal laws at a fixed time become asymptotically independent as $N \to \infty$. This follows upon observing that the convergence of empirical measures shown in these papers implies a finite-time POC \cite[Proposition 9 and Theorem 3.21]{chaintron2022propagation}. However, owing to the lack of Lipschitz property of the vector field driving \eqref{eq:CTSVGD}, this POC can only be extended to growing times $t = t_N = O(\log \log N)$ when the particle marginal laws are not necessarily close to $\pi$. In contrast, Proposition \ref{POC} extends to the time interval $[0,N]$ (hence, long-time POC) and the time-averaged particle trajectories essentially produce i.i.d. samples from $\pi$. 
\end{remark}

\clearpage

\section*{Acknowledgements}
KB was supported in part by National Science Foundation (NSF) grant DMS-2413426. SB was supported in part by NSF CAREER award DMS-2141621 and NSF RTG grant DMS- 2134107. We thank Lester Mackey for his insightful comments on an earlier version of this work and for valuable discussions regarding the broader literature on $\KSD$ and SVGD.

\bibliography{ref}
\bibliographystyle{abbrvnat}

\appendix

\section{Proofs}\label{proofs}

\subsection{Proof of Lemma~\ref{lem:density}}\label{app}

%\begin{proof}[]\label{subsec:Lemma1}
    Let $F: \left(\mathbb{R}^{d}\right)^N \rightarrow \left(\mathbb{R}^{d}\right)^N$ be given by 
    $$
    F_i(\uz) \coloneqq  -\frac{1}{N} \sum_j \ker(z_i, z_j) \nabla V(z_j) + \frac{1}{N} \sum_j \nabla_2 \ker(z_i, z_j),
    $$ 
    for $ 1 \le i \le N$. By Assumption \ref{regularity}, $F$ is a $\mathcal{C}^2$ map. Note that the SVGD particle trajectories can be written as $\{\mathbf{x}(t, \underline{x}(0)) : t \ge 0\}$, where the flow $\mathbf{x}: [0, \infty) \times \left(\mathbb{R}^{d}\right)^N \rightarrow \left(\mathbb{R}^{d}\right)^N$ is given by
    $$
    \dot{\mathbf{x}}(t,\uz) = F(\mathbf{x}(t, \uz)), \quad \mathbf{x}(0, \uz) = \uz,
    $$ 
    with $\dot{\mathbf{x}}$ denoting the time derivative.
    By \citet[Chapter 5, Cor. 4.1]{hartman2002ordinary}, the map $(t,\uz) \mapsto \mathbf{x}(t,\uz)$, and consequently, the inverse map $(t,\uz) \mapsto \mathbf{x}(t,\cdot)^{-1}(\uz)$, are $\mathcal{C}^2$ maps on $(0, \infty) \times \left(\mathbb{R}^{d}\right)^N $.

    A simple change of variable formula gives (see \citet[Page 21]{crippa2008flow})
    $$
    p(t,\uz) = \frac{p_0}{\operatorname{det}(\nabla \mathbf{x}(t, \cdot))} \circ \mathbf{x}(t,\cdot)^{-1}(\uz), \quad (t,\uz) \in (0, \infty) \times \left(\mathbb{R}^{d}\right)^N.
    $$
    
    The existence and regularity of $p(\cdot, \cdot)$ then follows from the above observations.
%\end{proof}

\subsection{Proof of Theorem~\ref{thm:marginalrates}}\label{subsec:marginal_rates}
We will first show tightness of $\{\bar{\mu}^N\}_N$. By subadditivity of relative entropy (which follows from \citet[Lemma 2.4(b) and Theorem 2.6]{budhiraja2019analysis}), we have
    \begin{align*}
        \KL (\mathcal{L}(x_1(t))||\pi) \leq \frac{1}{N} \KL (p(t)||\pi^{\otimes N}) \leq  \frac{\KL(p(0) || \pi^{\otimes N} )}{N} + \frac{C^* t}{N}.
    \end{align*}
Hence, there exists $C>0$ such that $\KL (\mathcal{L}(x_1(t))||\pi) \le C$ for all $t \in [0,N]$, $\forall N \geq 1 $. Fix any $\epsilon>0$. Let $\delta>0$ such that $\delta C < \epsilon/2$. 
Let $K$ be a compact subset of $\mathbb{R}^d$ such that $\delta \log(1 + \pi(K^c)(e^{1/\delta}-1)) \leq \epsilon/2$.
By the variational representation of relative entropy \cite[Prop. 2.3]{budhiraja2019analysis} and Theorem~\ref{thm:KSDrates}, we have
    \begin{align*}
        \mathbb{P}(x_1(t) \notin K) &\leq \delta \left[ \log \left(\int e^{\frac{1}{\delta}\mathbbm{1}_{K^c}(z)}  \pi(\d z)\right) + \KL (\mathcal{L}(x_1(t))||\pi)\right]\\
        &\leq \delta \log(1 + \pi(K^c)(e^{1/\delta}-1)) + \delta C < \epsilon
    \end{align*}
    for all $t \in [0,N]$, $\forall N \geq 1 $. In particular, $\{\bar{\mu}^N\}_N$ is tight. Moreover, we have 
    \begin{align*}
        \KSD(\bar{\mu}^N||\pi) & = \KSD\left( \frac{1}{N}\int_0^N\mathcal{L}(x_1(t)) \d t||\pi \right) \\
        &\le \frac{1}{N}\int_0^N \KSD(\mathcal{L}(x_1(t))||\pi )\d t \qquad \text{\emph{by the convexity of $\KSD$}} \\
        & = \frac{1}{N}\int_0^N \KSD( \E[\mu^N(t)] || \pi ) \ \ \text{\emph{using exchangeability, where}}\ \E[\mu^N(t)](\d x) \coloneqq \E[\mu^N(t,\d x)]\\
        &\leq  \frac{1}{N}\int_0^N \E[\KSD(\mu^N(t) || \pi)] \d t \to 0 \qquad \text{\emph{by $\KSD$ convexity and Theorem~\ref{thm:KSDrates}}}. 
    \end{align*}
The result now follows from~\citet[Theorem 7]{gorham2017measuring}.

\subsection{Proof of Theorem~\ref{discrates}}\label{subsec:proof_of_discrates}
The proof of Theorem \ref{discrates} proceeds through the following lemmas. We will assume throughout that Assumption \ref{discass} holds.

\begin{lemma}\label{tup}
$\|\mathbf{T}(\underline{x})\|^2 \le 2A^2B^2N\left(\frac{1}{N}\sum_i V(x_i)\right)^{2\alpha} + 2NB^2d, \quad \underline{x} \in \left(\mathbb{R}^{d}\right)^N$.
\end{lemma}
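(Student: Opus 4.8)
The plan is to bound $\|\mathbf{T}(\underline{x})\|^2 = \sum_{i=1}^N \|\mathsf{T}_i(\underline{x})\|^2$ particle by particle, splitting each $\mathsf{T}_i$ into its two natural pieces. Write $\mathsf{T}_i(\underline{x}) = \mathsf{A}_i(\underline{x}) - \mathsf{B}_i(\underline{x})$, where $\mathsf{A}_i(\underline{x}) \coloneqq \frac{1}{N}\sum_j \ker(x_i,x_j)\nabla V(x_j)$ and $\mathsf{B}_i(\underline{x}) \coloneqq \frac{1}{N}\sum_j \nabla_2\ker(x_i,x_j)$, and use the elementary inequality $\|\mathsf{A}_i - \mathsf{B}_i\|^2 \le 2\|\mathsf{A}_i\|^2 + 2\|\mathsf{B}_i\|^2$. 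It then suffices to bound each of $\|\mathsf{A}_i\|^2$ and $\|\mathsf{B}_i\|^2$ uniformly over $i$ by, respectively, $A^2B^2\bigl(\frac{1}{N}\sum_j V(x_j)\bigr)^{2\alpha}$ and $B^2 d$, and sum over $i$.

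For the first piece, I would use the triangle inequality together with Assumption~\ref{discass}(a) ($|\ker(x_i,x_j)| \le B$) and Assumption~\ref{discass}(b) ($\|\nabla V(x_j)\| \le A V(x_j)^\alpha$, which is meaningful since $\inf V > 0$) to get $\|\mathsf{A}_i(\underline{x})\| \le \frac{AB}{N}\sum_j V(x_j)^\alpha$. The key step is then Jensen's inequality applied to the concave map $t \mapsto t^\alpha$ on $(0,\infty)$ (concave because $\alpha \in [0,1/2] \subset [0,1]$), which gives $\frac{1}{N}\sum_j V(x_j)^\alpha \le \bigl(\frac{1}{N}\sum_j V(x_j)\bigr)^\alpha$; squaring yields the claimed bound on $\|\mathsf{A}_i\|^2$.

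For the second piece, Assumption~\ref{discass}(a) bounds every first-order partial derivative of $\ker$ by $B$, so each vector $\nabla_2\ker(x_i,x_j) \in \mathbb{R}^d$ has Euclidean norm at most $B\sqrt{d}$; by the triangle inequality $\|\mathsf{B}_i(\underline{x})\| \le \frac{1}{N}\sum_j \|\nabla_2\ker(x_i,x_j)\| \le B\sqrt{d}$, hence $\|\mathsf{B}_i(\underline{x})\|^2 \le B^2 d$. Combining the two estimates gives $\|\mathsf{T}_i(\underline{x})\|^2 \le 2A^2B^2\bigl(\frac{1}{N}\sum_j V(x_j)\bigr)^{2\alpha} + 2B^2 d$, and summing over $i = 1,\dots,N$ produces exactly the asserted inequality.

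There is no genuine obstacle here — the argument is a sequence of triangle-inequality and boundedness estimates — but the one step that actually carries content is the Jensen/concavity bound converting $\frac{1}{N}\sum_j V(x_j)^\alpha$ into $\bigl(\frac{1}{N}\sum_j V(x_j)\bigr)^\alpha$; this is precisely what allows the growth term to be expressed through the path functional $n \mapsto \frac{1}{N}\sum_i V(x_i(n))$ that is later controlled a priori in Lemma~\ref{apriori}. One should also take a moment to confirm the dimension counting in the second piece ($\|\nabla_2\ker\|^2 \le d B^2$, not $B^2$), since that is where the factor $d$ in the final bound originates.
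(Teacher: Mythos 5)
Your proof is correct and follows essentially the same route as the paper: bound the kernel by $B$, use the growth condition $\|\nabla V\|\le AV^{\alpha}$, bound $\|\nabla_2\ker\|$ by $B\sqrt{d}$, and apply Jensen's inequality via concavity (you on $t\mapsto t^{\alpha}$ before squaring, the paper on $t\mapsto t^{2\alpha}$ using $\alpha\le 1/2$). The only cosmetic difference is that you use the triangle inequality within each $\mathsf{T}_i$ where the paper uses $\|\sum_j a_j\|^2\le N\sum_j\|a_j\|^2$, and both yield exactly the stated constants.
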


\begin{proof}[Proof of Lemma~\ref{tup}]
Observe that
\begin{align*}
\|\mathbf{T}(\underline{x})\|^2 &= \frac{1}{N^2}\sum_i\|\sum_j (\ker(x_i,x_j) \nabla V(x_j) - \nabla_2\ker(x_i,x_j)\|^2\\
& \le \frac{2}{N}\sum_{i,j}\left(\ker^2(x_i, x_j)\| \nabla V(x_j)\|^2 + \|\nabla_2\ker(x_i,x_j)\|^2\right)\\
& \le 2A^2B^2N\left(\frac{1}{N}\sum_{j}V(x_j)^{2\alpha}\right) + 2NB^2d\\
&\le 2A^2B^2N\left(\frac{1}{N}\sum_{j}V(x_j)\right)^{2\alpha} + 2NB^2d,
\end{align*}
where the last step follows by Jensen's inequality noting $\alpha < 1/2$.
\end{proof}

The following lemma gives a key `a priori' bound on the growth rate of $N^{-1}\sum_{i}V(x_i(n))$ in terms of $n$ and $\eta$ and is crucial to the rest of the proof.

\begin{lemma}\label{apriori}
There exist positive constants $M,D$ depending only on the constants appearing in Assumption \ref{discass} such that for $T \ge 1$, $ 0 \le n \le T$, $\eta \le 1 \wedge \frac{1}{\sqrt{DT}}$,
$$
\frac{1}{N}\sum_{i}V(x_i(n)) \le M\left(d^{\frac{1}{1 - \alpha}} + \frac{1}{N}\sum_{i}V(x_i(0))\right)\left[\left(\eta n\right)^{\frac{1}{1 - \alpha}} \vee 1\right].
$$
\end{lemma}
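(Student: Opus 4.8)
The plan is to track the averaged potential $w_n \coloneqq \tfrac1N\sum_i V(x_i(n))$ directly along the iteration $x_i(n+1)=x_i(n)-\eta\,\mathsf{T}_i(\ux(n))$ and to reduce everything to a closed scalar recursion for $w_n$. A second-order Taylor expansion together with $\sup_z\|H_V(z)\|_{op}=C_V$ (Assumption~\ref{discass}(c)) gives, for each $i$, $V(x_i(n+1))\le V(x_i(n))-\eta\,\nabla V(x_i(n))\cdot \mathsf{T}_i(\ux(n))+\tfrac{C_V\eta^2}{2}\|\mathsf{T}_i(\ux(n))\|^2$. Averaging over $i$, the key point is that the first-order term is mild: expanding $\mathsf{T}_i$,
\[
-\frac1N\sum_i \nabla V(x_i)\cdot \mathsf{T}_i(\ux)=-\frac{1}{N^2}\sum_{i,j}\ker(x_i,x_j)\,\nabla V(x_i)\cdot\nabla V(x_j)+\frac{1}{N^2}\sum_{i,j}\nabla V(x_i)\cdot\nabla_2\ker(x_i,x_j),
\]
and the first double sum is nonnegative because $\ker$ is positive definite (apply the Gram-matrix inequality coordinate-wise to the vectors $\nabla V(x_1),\dots,\nabla V(x_N)\in\mathbb{R}^d$). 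Hence, using $\|\nabla_2\ker\|\le B\sqrt d$ (Assumption~\ref{discass}(a)), the growth bound $\|\nabla V(x)\|\le AV(x)^{\alpha}$ (Assumption~\ref{discass}(b)), and Jensen's inequality (valid as $\alpha\le1$), the averaged first-order term is at most $AB\sqrt d\,w_n^{\alpha}$. This cancellation is what produces the exponent $\tfrac{1}{1-\alpha}$; without it one would be left with a $w_n^{2\alpha}$ term and the worse scaling $\tfrac{1}{1-2\alpha}$ — I expect this to be the main conceptual obstacle, the rest being careful bookkeeping.

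For the second-order term, Lemma~\ref{tup} gives $\tfrac1N\|\mathbf{T}(\ux)\|^2\le 2A^2B^2 w_n^{2\alpha}+2B^2d$, and the elementary inequality $w^{2\alpha}\le 1+w$ (valid since $2\alpha\le1$) linearises the $w_n^{2\alpha}$ contribution. Combining the two estimates and using $d\ge1$, one arrives at a recursion of the form $w_{n+1}\le (1+\lambda\eta^2)\,w_n+\nu\,\eta\sqrt d\,w_n^{\alpha}+\mu\,\eta^2 d$, with $\lambda=C_VA^2B^2$, $\nu=AB$, $\mu=C_VB^2(A^2+1)$, depending only on the constants in Assumption~\ref{discass}. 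Now fix $D\coloneqq\max\{1,\lambda/\ln 2\}$; then $\eta\le 1/\sqrt{DT}$ forces $\eta^2T\le 1/D$, so $(1+\lambda\eta^2)^n\le e^{\lambda\eta^2T}\le 2$ for all $0\le n\le T$. Unrolling the recursion and pulling out this uniformly bounded multiplicative factor gives, with $\tau_n\coloneqq\eta n$ and using $\eta\le1$ so that $\eta^2 n\le\tau_n$,
\[
w_n\le 2w_0+2\nu\sqrt d\,\eta\sum_{m=0}^{n-1}w_m^{\alpha}+2\mu d\,\tau_n.
\]

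The proof then closes by induction on $n$ with the ansatz $w_m\le MR\,(\tau_m^{1/(1-\alpha)}\vee 1)$, where $R\coloneqq d^{1/(1-\alpha)}+w_0$; the case $n=0$ holds for any $M\ge1$ since $w_0\le R$. For the step to $n\ge1$, substitute the ansatz (valid for $0,\dots,n-1$) into the last display and compare the sum with an integral: since $s\mapsto s^{\alpha/(1-\alpha)}\vee 1$ is non-decreasing, $\eta\sum_{m=0}^{n-1}(\tau_m^{\alpha/(1-\alpha)}\vee 1)\le\int_0^{\tau_n}(s^{\alpha/(1-\alpha)}\vee 1)\,ds\le \tau_n^{1/(1-\alpha)}\vee 1$, where the last step is a direct computation splitting at $s=1$ and using $\tfrac{\alpha}{1-\alpha}+1=\tfrac{1}{1-\alpha}$. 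This yields $w_n\le\big(2w_0+2\nu\sqrt d\,(MR)^{\alpha}+2\mu d\big)(\tau_n^{1/(1-\alpha)}\vee 1)$, so it remains to verify $2w_0+2\nu\sqrt d\,(MR)^{\alpha}+2\mu d\le MR$. Since $w_0\le R$ and $d\le d^{1/(1-\alpha)}\le R$, and since $R\ge d^{1/(1-\alpha)}$ gives $\sqrt d\,R^{\alpha-1}\le\sqrt d\,d^{-1}\le1$, it suffices that $2\nu M^{\alpha}\le M-2-2\mu$, which holds for every $M\ge\max\{1,(4\nu)^{1/(1-\alpha)},4+4\mu\}$ — a finite constant depending only on $A,B,C_V$ because $\tfrac{1}{1-\alpha}\le2$. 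This choice of $M$ and $D$ gives the lemma.
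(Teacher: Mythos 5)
Your proposal is correct, and its first half coincides with the paper's argument: the same per-particle Taylor expansion with the Hessian bound, the same crucial cancellation of the quadratic term $-\frac{1}{N^2}\sum_{i,j}\ker(x_i,x_j)\langle\nabla V(x_i),\nabla V(x_j)\rangle$ via positive definiteness, the bound $AB\sqrt{d}\,w_n^{\alpha}$ on the remaining first-order term, and Lemma \ref{tup} plus the linearization $w^{2\alpha}\le 1+w$ for the second-order term, yielding exactly the recursion \eqref{fbd} (your $\lambda,\nu,\mu$ are the paper's $D_1, AB, D_3$ up to absorbing constants). Where you diverge is in how the recursion is solved. The paper runs a two-phase argument: first a stopping-time bootstrap on $0\le n\le\lceil 1/\eta\rceil$ showing $f(n)\le \hat M(d+f(0))$ there, and then a one-step induction for $1/\eta\le n\le T$ with the ansatz $\beta(n\eta)^{1/(1-\alpha)}$, where the key point is that the loss factor $(1+D_1\eta^2)$ is beaten at each step by the gain $\left(\frac{n}{n+1}\right)^{1/(1-\alpha)}\approx 1-\frac{1}{(1-\alpha)n}$, which is what dictates the constraint $\eta\lesssim 1/\sqrt{D_1 T}$. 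You instead choose $D$ so that the compounded factor $(1+\lambda\eta^2)^n\le 2$ uniformly for $n\le T$, unroll the recursion, and close a single strong induction (with the $\vee\,1$ built into the ansatz) via a sum-to-integral comparison of $\eta\sum_{m<n}(\tau_m^{\alpha/(1-\alpha)}\vee 1)$; I checked the integral bound, the closing constant inequality $2+2\nu M^{\alpha}+2\mu\le M$, and the constant dependencies, and they are all fine. Your route avoids the case split at $n\approx 1/\eta$ and the delicate step-by-step cancellation, at the price of a cruder (but still admissible, since $D$ may depend only on the Assumption \ref{discass} constants) restriction on $\eta$ through the uniform factor-of-$2$ bound; both arguments give the stated conclusion with $\eta\le 1\wedge\frac{1}{\sqrt{DT}}$, so the difference is purely one of bookkeeping style rather than of strength.
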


\begin{proof}[Proof of Lemma~\ref{apriori}]
Note that, using Taylor's theorem,
\begin{align*}
&\quad\frac{1}{N}\sum_{i}V(x_i(n+1)) - \frac{1}{N}\sum_{i}V(x_i(n)) = \frac{1}{N}\sum_i \langle \nabla V(x_i(n)), x_i(n+1) - x_i(n)\rangle\\
&\quad + \frac{1}{N}\sum_i\int_0^1(1-s)\langle x_i(n+1) - x_i(n), H_V(x_i(n) - s \eta \mathsf{T}_i(\underline{x}(n))) (x_i(n+1) - x_i(n))\rangle ds\\
& \le \frac{1}{N}\sum_i \langle \nabla V(x_i(n)), x_i(n+1) - x_i(n)\rangle + \frac{C_V \eta^2}{2N}\|\mathbf{T}(\underline{x}(n))\|^2.
\end{align*}
Now, applying Lemma \ref{tup} in the above and writing $D_1 = A^2B^2C_V$ and $D_2 = B^2C_V$, we obtain
\begin{align*}
&\quad\frac{1}{N}\sum_{i}V(x_i(n+1)) - \frac{1}{N}\sum_{i}V(x_i(n))\\
& \le \frac{1}{N}\sum_i \langle \nabla V(x_i(n)), x_i(n+1) - x_i(n)\rangle + D_1\eta^2\left(\frac{1}{N}\sum_{j}V(x_j(n))\right)^{2\alpha} + D_2d\eta^2\\
&= -\frac{\eta}{N^2}\sum_{i,j}\langle \nabla V(x_i(n)), \ker(x_i(n), x_j(n))\nabla V(x_j(n))\rangle + \frac{\eta}{N^2}\sum_{i,j}\langle \nabla V(x_i(n)), \nabla_2\ker(x_i(n), x_j(n))\rangle\\
&\quad + D_1\eta^2\left(\frac{1}{N}\sum_{j}V(x_j(n))\right)^{2\alpha} + D_2d\eta^2\\
&\le \frac{\eta}{N^2}\sum_{i,j}\langle \nabla V(x_i(n)), \nabla_2\ker(x_i(n), x_j(n))\rangle
 + D_1\eta^2\left(\frac{1}{N}\sum_{j}V(x_j(n))\right)^{2\alpha} + D_2d\eta^2,
\end{align*}
where, in the last step, we used the positive-definiteness of $\ker$. Thus, suppressing the dependence on $n$ of the right hand side to avoid cumbersome notation, we obtain
\begin{align*}
&\quad\frac{1}{N}\sum_{i}V(x_i(n+1)) - \frac{1}{N}\sum_{i}V(x_i(n))\\
& \le \frac{B\sqrt{d}\eta}{N}\sum_i \|\nabla V(x_i)\| + D_1\eta^2\left(\frac{1}{N}\sum_{j}V(x_j)\right)^{2\alpha} + D_2d\eta^2\\
&\le AB\sqrt{d}\eta \left(\frac{1}{N}\sum_i V(x_i)\right)^\alpha + D_1\eta^2\left(\frac{1}{N}\sum_{j}V(x_j)\right)^{2\alpha} + D_2d\eta^2.
\end{align*}
Thus, writing $f(n)\coloneqq \frac{1}{N}\sum_i V(x_i(n))$, recalling $\alpha \le 1/2$ and using $f(n)^{2\alpha} \le 1 + f(n)$, we obtain
\begin{equation}\label{fbd}
f(n+1) \le (1 + D_1 \eta^2)f(n) + AB\sqrt{d}\eta f(n)^\alpha + D_3 d \eta^2,
\end{equation}
where $D_3 = D_1 + D_2$. We will now harness the recursive bound \eqref{fbd} to obtain the claimed bound in the lemma. First, we handle the case $0 \le n \le \lceil 1/\eta \rceil$. Fix $L >0$. Define 
$$
\tau_{L} \coloneqq \sup\{n \ge 0: f(n) \le L\} \wedge \lceil 1/\eta \rceil.
$$
Then for $1\le n \le \tau_L$, \eqref{fbd} gives for $\eta \le 1$,
\begin{align*}
f(n) &\le (1 + D_1 \eta^2)f(n-1) + AB\sqrt{d}\eta L^\alpha + D_3 d \eta^2\\
& \le \sum_{\ell=0}^{n-1}(1 + D_1 \eta^2)^\ell(AB\sqrt{d}\eta L^\alpha + D_3 d \eta^2) + (1 + D_1 \eta^2)^nf(0)\\
&\le \frac{2}{\eta}(1 + D_1 \eta^2)^{1/\eta}(AB\sqrt{d}\eta L^\alpha + D_3 d \eta^2)+ (1 + D_1 \eta^2)^{1 + 1/\eta}f(0)\\
&\le 2e^{D_1}(AB\sqrt{d} L^\alpha + D_3 d) + (1+ D_1)e^{D_1}f(0).
\end{align*}
Hence, taking $L = \hat{M}\left(d + f(0)\right)$ for some suitably large $\hat{M} \ge 1$ depending only on the constants appearing in Assumption \ref{discass}, we conclude from the above bound that $f(\tau_L) < L$ and hence $\tau_L = \lceil 1/\eta \rceil$. Thus,
\begin{equation*}
f(n) \le  \hat{M}\left(d + f(0)\right) \quad \text{ for all } 0 \le n \le \lceil 1/\eta \rceil.
\end{equation*}
Now, we handle the case $1/\eta \le n \le T$. We will proceed by induction. Let 
\begin{align*}
\beta\coloneqq  M (d^{\frac{1}{1 - \alpha}} + f(0)),\qquad\text{where}\qquad M = \hat{M} \vee [16(1-\alpha)^2(AB + D_3)]^{\frac{1}{1 - \alpha}}.
\end{align*}
Take any $\eta \le 1 \wedge \frac{1}{4(1-\alpha)\sqrt{D_1 T}}$. Then, by the above bound, note that for $n =\lceil 1/\eta \rceil$, $f(n) \le \beta (n \eta)^{\frac{1}{1 - \alpha}}$. Suppose for some $1/\eta \le n \le T$, $f(n) \le \beta (n \eta)^{\frac{1}{1 - \alpha}}$. Then, by \eqref{fbd},
\begin{align*}
f(n+1) &\le (1 + D_1 \eta^2)\beta (n \eta)^{\frac{1}{1 - \alpha}} + AB\sqrt{d}\eta \beta^\alpha (n \eta)^{\frac{\alpha}{1 - \alpha}} + D_3 d \eta^2\\
&\le \beta ((n+1) \eta)^{\frac{1}{1 - \alpha}}\left[(1 + D_1 \eta^2)\left(\frac{n}{n+1}\right)^{\frac{1}{1 - \alpha}} + \frac{AB\sqrt{d}}{\beta^{1-\alpha}n} + \frac{D_3 d\eta^{\frac{1-2\alpha}{1-\alpha}}}{\beta n^{\frac{1}{1 - \alpha}}}\right]\\
&\le \beta ((n+1) \eta)^{\frac{1}{1 - \alpha}}\left[(1 + D_1 \eta^2) - \left\lbrace\frac{1}{8(1-\alpha)^2n} - \frac{AB\sqrt{d} + D_3d}{\beta^{1-\alpha}n}\right\rbrace\right]\\
& \le \beta ((n+1) \eta)^{\frac{1}{1 - \alpha}}\left[(1 + D_1 \eta^2) - \left\lbrace\frac{1}{8(1-\alpha)^2n} - \frac{AB\sqrt{d} + D_3d}{M^{1-\alpha}dn}\right\rbrace\right]\\
&\le \beta ((n+1) \eta)^{\frac{1}{1 - \alpha}}\left[1 + D_1\eta^2 - \frac{1}{16(1-\alpha)^2n}\right]\\
& \le \beta ((n+1) \eta)^{\frac{1}{1 - \alpha}},
\end{align*}
where the fifth inequality uses the choice of $M$ and the last inequality uses the bound on $\eta$. The claimed bound follows by induction.
\end{proof}
We will now use Lemma \ref{apriori} to obtain bounds on $\|\mathbf{T}(\underline{x}(n))\|^2$ and the Hilbert-Schmidt norm of the Jacobian matrix $J\mathbf{T}(\underline{x}(n))$.

\begin{lemma}\label{hsbound}
For $T \ge 1$, $ 0 \le n \le T$, $\eta \le 1 \wedge \frac{1}{\sqrt{DT}}$, we have
\begin{align*}
\|\mathbf{T}(\underline{x}(n))\|^2 &\le 2A^2B^2N\left[M^{2\alpha}\left(d^{\frac{1}{1 - \alpha}} + \frac{1}{N}\sum_{i}V(x_i(0))\right)^{2\alpha}\left[\left(\eta n\right)^{\frac{2\alpha}{1 - \alpha}} \vee 1\right]\right] + 2NB^2d,\\
\|J\mathbf{T}(\underline{x}(n))\|_{HS}^2 &\le 8A^2 B^2 d(N+2)\left[M^{2\alpha}\left(d^{\frac{1}{1 - \alpha}} + \frac{1}{N}\sum_{i}V(x_i(0))\right)^{2\alpha}\left[\left(\eta n\right)^{\frac{2\alpha}{1 - \alpha}} \vee 1\right]\right]\\
&\quad  + 8B^2d^2(N+3) + 8B^2dC_V^2.
\end{align*}
\end{lemma}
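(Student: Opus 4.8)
The plan is to obtain the two displays in turn: the bound on $\|\mathbf{T}(\underline{x}(n))\|^2$ is an immediate corollary of the two preceding lemmas, while the Hilbert--Schmidt bound requires an explicit computation of the Jacobian of $\mathbf{T}$ followed by the same substitution. For the first, Lemma~\ref{tup} gives
\[
\|\mathbf{T}(\underline{x}(n))\|^2 \le 2A^2B^2N\Big(\tfrac{1}{N}\textstyle\sum_i V(x_i(n))\Big)^{2\alpha} + 2NB^2d,
\]
and for $\eta \le 1\wedge \tfrac{1}{\sqrt{DT}}$ (same $D$ as in Lemma~\ref{apriori}) Lemma~\ref{apriori} controls $\tfrac{1}{N}\sum_i V(x_i(n))$; raising that control to the power $2\alpha\in[0,1]$ and using $(a\vee 1)^{2\alpha}=a^{2\alpha}\vee 1$ for $a\ge 0$ reproduces the first claimed inequality verbatim.

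For the Hilbert--Schmidt bound, view $J\mathbf{T}(\underline{x})$ as an $N\times N$ array of $d\times d$ blocks $\partial\mathsf{T}_i/\partial x_\ell$, so that $\|J\mathbf{T}(\underline{x})\|_{HS}^2=\sum_{i,\ell}\|\partial\mathsf{T}_i/\partial x_\ell\|_{HS}^2$. Differentiating $\mathsf{T}_i(\underline{x})=\tfrac{1}{N}\sum_j[\ker(x_i,x_j)\nabla V(x_j)-\nabla_2\ker(x_i,x_j)]$: for $\ell\neq i$ only the $j=\ell$ summand survives, giving $\tfrac{1}{N}[\nabla V(x_\ell)(\nabla_2\ker(x_i,x_\ell))^{\top}+\ker(x_i,x_\ell)H_V(x_\ell)-(\text{matrix of second }x_\ell\text{-partials of }\ker)]$; the diagonal block $\partial\mathsf{T}_i/\partial x_i$ equals $\tfrac{1}{N}\sum_j[\nabla V(x_j)(\nabla_1\ker(x_i,x_j))^{\top}-(\text{matrix of mixed }x_i,x_j\text{-partials of }\ker)]$ plus an $O(1/N)$ correction coming from the $j=i$ summand (where $\ker(x_i,x_i)$ and $\nabla_2\ker(x_i,x_i)$ must be differentiated through both arguments --- the only place $H_V$ enters a diagonal block). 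Each block norm is then bounded via Assumption~\ref{discass}(a) (all partials of $\ker$ up to order $2$ are $\le B$, so $\|\nabla_j\ker\|\le B\sqrt{d}$ and any $d\times d$ matrix of second partials of $\ker$ has Hilbert--Schmidt norm $\le Bd$), Assumption~\ref{discass}(c) ($\|H_V\|_{HS}\le\sqrt{d}\,C_V$), and the outer-product identity $\|uv^{\top}\|_{HS}=\|u\|\,\|v\|$.

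Summing the squared block norms with $(a+b+c)^2\le 3(a^2+b^2+c^2)$ while tracking the $1/N$ and $1/N^2$ prefactors: the $N(N-1)$ off-diagonal blocks contribute terms of order $B^2d^2$, $B^2dC_V^2$ and $A^2B^2d\,(\tfrac1N\sum_j V(x_j))^{2\alpha}$; the $N$ diagonal blocks contribute $NB^2d^2$ (from the kernel-derivative pieces) and $NA^2B^2d\,(\tfrac1N\sum_j V(x_j))^{2\alpha}$ (from the rank-one pieces, after pulling the empirical average inside the square by Jensen), and the $j=i$ corrections together with the diagonal $H_V$-terms are $O(1/N)$. Throughout, each $\|\nabla V(x_j)\|^2$ becomes $A^2V(x_j)^{2\alpha}$ by Assumption~\ref{discass}(b) and then $A^2(\tfrac1N\sum_j V(x_j))^{2\alpha}$ by Jensen (valid since $2\alpha\le1$). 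Substituting Lemma~\ref{apriori} for $\tfrac1N\sum_j V(x_j(n))$ exactly as before and absorbing numerical constants into generous multiples gives the claimed $8A^2B^2d(N+2)[\cdots]+8B^2d^2(N+3)+8B^2dC_V^2$.

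The main obstacle is the diagonal block: since it is an average of $N$ summands, Jensen's inequality must be applied in the right direction so that its $\nabla V$-contribution has size $N(\tfrac1N\sum_j V(x_j))^{2\alpha}$ rather than $N^2$ or $N^0$, and one must verify separately that the anomalous $j=i$ term --- the only place where $H_V$ and the diagonal kernel values $\ker(x,x),\nabla_2\ker(x,x)$ appear and where one differentiates through both arguments --- contributes only $O(1/N)$ and hence sits comfortably inside the final constants. Everything else is a mechanical combination of Lemmas~\ref{tup} and~\ref{apriori} with the uniform boundedness in Assumption~\ref{discass}.
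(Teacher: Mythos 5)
Your proposal is correct and follows essentially the same route as the paper: the first bound is exactly the combination of Lemma~\ref{tup} with Lemma~\ref{apriori}, and the Hilbert--Schmidt bound is obtained, as in the paper, by differentiating $\mathsf{T}_i$ term by term, bounding the $\nabla V$-outer-product, Hessian, and second-derivative-of-$\ker$ pieces via Assumption~\ref{discass} with Jensen applied to the averaged (diagonal, first-argument) contribution, and then substituting Lemma~\ref{apriori}; your block-wise bookkeeping with the separate $j=i$ correction is just a reorganization of the paper's four-term split of $\partial_{jl}\mathsf{T}_{ik}$ and yields constants comfortably within the stated factors of $8$.
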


\begin{proof}[Proof of Lemma~\ref{hsbound}]
The first bound follows from Lemma \ref{tup} and Lemma \ref{apriori}. To prove the second bound, note that
$$
\|J\mathbf{T}(\underline{x})\|_{HS}^2 = \sum_{i,j = 1}^N \sum_{k,l=1}^d\|\partial_{jl}\mathsf{T}_{ik}(\underline{x})\|^2,
$$
where for $v_i \in \mathbb{R}^d$, $v_{ik}$ denotes the $k$th coordinate of $v_i$ and $\partial_{jl}$ denotes the partial derivative with respect to $x_{jl}$. We will also write for $m=1,2,$ $\partial_{m k}\ker$ to denote the partial derivative of $\ker$ with respect to the $k$th coordinate of the $m$th variable. Observe that,
\begin{align*}
\partial_{x_{jl}}\mathsf{T}_{ik}(\underline{x}) &= \frac{1}{N}\sum_{u=1}^N\left(\partial_{1l}\ker(x_i,x_u)\partial_kV(x_u) - \partial_{2k}\ker(x_i, x_u) \right)\mathbbm{1}(i=j)\\
&\quad + \frac{1}{N}\left(\partial_{2l}\ker(x_i,x_j)\partial_kV(x_j) - \partial_{2k}\ker(x_i, x_j) \right)\\
&\quad + \frac{1}{N}\left(\ker(x_i,x_j)\partial_{lk}V(x_j) - \partial_{2k}\ker(x_i, x_j) \right)\\
&\quad + \frac{1}{N}\left(\ker(x_i,x_j)\partial_{k}V(x_j) - \partial_{2l}\partial_{2k}\ker(x_i, x_j) \right).
\end{align*}
Write $S_m(i,j)$ for the $m$th term in the above bound for $m=1,2,3,4$. In a similar manner as the proof of Lemma \ref{tup},
\begin{align*}
\sum_{ijkl}|S_1(i,j)|^2 &\le \frac{1}{N}\sum_{iukl}\left(\partial_{1l}\ker(x_i,x_u)\partial_kV(x_u) - \partial_{2k}\ker(x_i, x_u) \right)^2\\
&= \frac{1}{N}\sum_{iul}\left\|\partial_{1l}\ker(x_i,x_u)\nabla V(x_u) - \nabla_{2}\ker(x_i, x_u) \right\|^2\\
&\le 2A^2B^2 d N\left(\frac{1}{N}\sum_i V(x_i)\right)^{2\alpha} + 2NB^2d^2.
\end{align*}
Similarly,
\begin{align*}
\sum_{ijkl}|S_2(i,j)|^2 &= \frac{1}{N^2}\sum_{ijkl}\left(\partial_{2l}\ker(x_i,x_j)\partial_kV(x_j) - \partial_{2k}\ker(x_i, x_j) \right)^2\\
&\le \frac{2B^2}{N^2}\sum_{ij}(d\|\nabla V(x_j)\|^2 + d^2)\\
&\le 2A^2B^2d\left(\frac{1}{N}\sum_i V(x_i)\right)^{2\alpha} + 2B^2d^2.
\end{align*}
Moreover,
\begin{align*}
\sum_{ijkl}|S_3(i,j)|^2 &\le \frac{2B^2}{N^2}\sum_{ijkl}\left((\partial_{lk}V(x_j))^2 + 1\right)\\
& \le 2B^2\sup_{z \in \mathbb{R}^d}\|H_V(z)\|^2 + 2B^2d^2 \le 2B^2dC_V^2 + 2B^2d^2.
\end{align*}
Finally,
\begin{align*}
\sum_{ijkl}|S_4(i,j)|^2 &\le \frac{2B^2}{N^2}\sum_{ijkl}\left((\partial_kV(x_j))^2 + 1\right) \le 2A^2B^2d\left(\frac{1}{N}\sum_i V(x_i)\right)^{2\alpha} + 2B^2d^2.
\end{align*}
Combining the above bounds, we obtain
\begin{align*}
\|J\mathbf{T}(\underline{x})\|_{HS}^2 \le 4\left[2A^2B^2 d (N+2)\left(\frac{1}{N}\sum_i V(x_i)\right)^{2\alpha} + 2B^2d^2(N+3) + 2B^2dC_V^2\right].
\end{align*}
The claimed bound in the lemma now follows from the above and Lemma \ref{apriori}.
\end{proof}

Now, we will complete the proof of Theorem~\ref{discrates}.  
\begin{proof}[Proof of Theorem~\ref{discrates}]
Fix $K>0$ as in the theorem and sample $(x_1(0), \ldots, x_N(0))$ from $p_K(0)$ supported on $\mathcal{S}_K$. Denote the law of $\underline{x}(n)$ by $p_K(n)$. As we will work with fixed $0 \le n \le T$ for the first portion of the proof, we will suppress dependence on $n$. Set $\nu(0) = p_K(n)$ and $\nu(\eta) = p_K(n+1)$. Interpolate these laws by defining $\nu(t) \coloneqq {\phi_t}_{\#} p_K(n)$, $t \in [0,\eta]$, where $\phi_t(\underline{x}) \coloneqq \underline{x} - t \mathbf{T}(\underline{x}), t \in [0,\eta]$. Write $S_{K} = (\operatorname{Id} - \eta\mathbf{T})^n\mathcal{S}_{K}$ and $S_{K,t} \coloneqq \phi_t(S_{K}), t \in [0,\eta]$.

Set the step-size $\eta$ as
\begin{align*}
   \eta = \left[\frac{1}{C_0} \left(\frac{1}{N^{1-\alpha}T^{2\alpha}}  \wedge \frac{1}{N}\right)\right]^{1/2} \theta,\quad\text{where}\quad\theta\in [0,1]
\end{align*}
will be appropriately chosen later and $C_0\coloneqq 2[24 A^2B^2 d M^{2\alpha} (d^{\frac{1}{1-\alpha}}+K)^{2\alpha} + 8B^2 d (C_V^2 + 4d) ]$. By Lemma \ref{hsbound}, this choice of $\eta$ ensures that for any $\ux \in S_K$, $t \in [0,\eta]$,
$$
\|t J\mathbf{T}(\ux)\|_{op} \le \eta \|J\mathbf{T}(\ux)\|_{HS} \le \frac{\theta}{2} \le \frac{1}{2}.
$$
Thus, $J \phi_t(\ux)$ is invertible for any such $\ux,t$ and
$$
\|(J\phi_t(\ux))^{-1}\|_{op} \le \sum_{k=0}^\infty \eta^k \|J\mathbf{T}(\ux)\|_{HS}^k \le 2.
$$
In particular, if $\nu(0)$ admits a density, then for any $t \in [0,\eta]$, $\nu(t)$ admits a density $q_K(t)$ given by
$$
q_K(t,\ux) = \left(\operatorname{det}(J\phi_t(\phi_t^{-1}(\ux)))\right)^{-1}p_K(n,\phi_t^{-1}(\ux)), \quad \ux \in S_{K,t}
$$
and $q_K(t,\ux) = 0$ otherwise. Writing $E(t) \coloneqq \KL(q_K(t) || \pi^{\otimes N})$, we obtain the following Taylor expansion on the interval $[0,\eta]$ along the lines of \cite{korba2020non}:
\begin{equation}\label{taylorent}
E(\eta) = E(0) + \eta E'(0) + \int_0^\eta(\eta-t)E''(t)\d t.
\end{equation}
clearly, $E(0) = \KL(p_K(n) || \pi^{\otimes N})$ and $E(\eta) = \KL(p_K(n+1) || \pi^{\otimes N})$. Moreover, by computations similar to \cite{korba2020non}, writing $\hat \nabla V(\ux) = (\nabla V(x_1), \ldots, \nabla V(x_N))'$, we obtain for $t \in [0,\eta]$,
\begin{align*}
    E'(t) &= -\int\operatorname{tr}\left((J\phi_t(\ux))^{-1}\partial_tJ\phi_t(\ux)\right)p_K(n,\ux) \d \ux + \int\langle \hat \nabla V(\phi_t(\ux)), \partial_t\phi_t(\ux) \rangle p_K(n,\ux) \d \ux\\
    &= \int\operatorname{tr}\left((J\phi_t(\ux))^{-1}J\mathbf{T}(\ux)\right)p_K(n,\ux) \d \ux - \int\langle \hat \nabla V(\phi_t(\ux)), \mathbf{T}(\ux) \rangle p_K(n,\ux) \d \ux.
\end{align*}
In particular, recalling $\Phi(z,w)\coloneqq -\ker(z,w) \nabla V(w) + \nabla_2 \ker(z,w)$,
\begin{align*}
E'(0) &= \int\left(\text{div}(\mathbf{T}(\ux)) - \langle\hat \nabla V(\ux), \mathbf{T}(\ux) \rangle\right) p_K(n,\ux) \d \ux\\
&= \int\left(-\frac{1}{N}\sum_{i,j}\text{div}_{x_i}\Phi(x_i, x_j) + \frac{1}{N}\sum_{i,j} \nabla V(x_i)\Phi(x_i,x_j)\right) p_K(n,\ux) \d \ux\\
&\le -N\E_{p_K(0)}\left[\KSD^2(\mu^N_n || \pi)\right] + c^*d
\end{align*}
along the same lines as the proof of Theorem \ref{thm:KSDrates}. Moreover, note that for $t \in [0,\eta]$,
$$
E''(t) = \psi_1(t) + \psi_2(t)
$$
where, using Lemma \ref{hsbound} and our choice of step-size $\eta$, 
$$
\psi_1(t) = \E_{\ux \sim p_K(n)}\left[\langle \mathbf{T}(\ux), H_V(\phi_t(\ux)) \mathbf{T}(\ux) \rangle\right] \le C_V\sup_{\ux \in S_K}\|\mathbf{T}(\ux)\|^2 \le \frac{\theta^2}{4 \eta^2},
$$
and
\begin{align*}
\psi_2(t) &= \int\|J\mathbf{T}(\ux) (J\phi_t(\ux))^{-1}\|_{HS}^2 \, p_K(n,\ux) \d \ux\\
&\le \sup_{\ux \in S_K}\|J\mathbf{T}(\ux)\|_{HS}^2\|(J\phi_t(\ux))^{-1}\|_{op}^2 \le 4\sup_{\ux \in S_K}\|J\mathbf{T}(\ux)\|_{HS}^2 \le \frac{\theta^2}{\eta^2}.
\end{align*}
Combining the above observations, we obtain the following key `descent lemma' for any $0 \le n \le T$:
\begin{align*}
\KL(p_K(n+1) || \pi^{\otimes N}) \le \KL(p_K(n) || \pi^{\otimes N}) - N\eta \E_{p_K(0)}\left[\KSD^2(\mu^N_n || \pi)\right] + c^*d\eta + \theta^2.
\end{align*}
Hence, for $T \ge 2$,
\begin{align*}
    \E_{p_K(0)}\left[\frac{1}{T}\sum_{n=0}^{T-1}\KSD^2(\mu^N_n || \pi)\right] \le \frac{\KL(p_K(0) || \pi^{\otimes N})}{NT\eta} + \frac{c^* d}{N} + \frac{\theta^2}{N \eta}.
\end{align*}
Note that we have,
\begin{align*}
\KL(p_K(0) || \pi^{\otimes N}) &= \int_{S_K} \frac{p_0(\ux)}{\mu_0(S_K)}\Big[ \log \Big( \frac{p_0(\ux)}{\pi^{\otimes N}(\ux)}\Big) - \log(\mu_0(S_K)) \Big]  \d\ux \\
& \leq \frac{1}{\mu_0(S_K)} \int_{(\mathbb{R}^{d})^N}p_0(\ux)\log \Big( \frac{p_0(\ux)}{\pi^{\otimes N}(\ux)}\Big)  \d\ux + \log \Big( \frac{1}{\mu_0(S_K)}\Big).
\end{align*}
Hence, under our assumption that $K$ satisfies $\mu_0(\mathcal{S}_K) \ge 1/2$, we have that
$$
\KL(p_K(0) || \pi^{\otimes N}) \le 2\KL(p(0) || \pi^{\otimes N}) + \log 2 \le \gamma d N,
$$
where $\gamma \coloneqq 2C_{KL} + \log 2$.
Using this in the previous display and recalling the choice of $\eta$, we obtain
\begin{align*}
     \E_{p_K(0)}\left[\frac{1}{T}\sum_{n=0}^{T-1}\KSD^2(\mu^N_n || \pi)\right] \le \gamma d \sqrt{C_0}\frac{\sqrt{N^{1-\alpha}T^{2\alpha} \vee N}}{T\theta} + \frac{c^* d}{N} + \sqrt{C_0}\frac{\theta \sqrt{N^{1-\alpha}T^{2\alpha} \vee N}}{N}.
\end{align*}
The above expression is `approximately' optimized on taking $T = N^{\frac{2-\alpha}{1-2\alpha}}$ and $ \theta = \sqrt{N/T} = N^{-\frac{1}{2} \left(\frac{1+\alpha}{1-2\alpha} \right)}$, which gives the bound
$$
\E_{p_K(0)}\left[\frac{1}{T}\sum_{n=0}^{T-1}\KSD^2(\mu^N_n || \pi)\right] \le \frac{(\gamma d + 1)\sqrt{C_0} + c^* d}{N}.
$$
The theorem follows from the above upon noting that $\sqrt{C_0} \le c\left(d^{\frac{1+\alpha}{2(1-\alpha)}} + \sqrt{d} K^{\alpha} + d\right)$ for some constant $c$ depending only on the constants appearing in Assumption \ref{discass}.
\end{proof}

\subsection{Proof of Theorem~\ref{thm:w2asymp}}\label{subsec:w2asymp}

The additional bilinear term in \eqref{eq:specifickernel} is the key to tackling Wasserstein convergence. It gives uniform control in $N,T$ over the second moment of the particle locations in the SVGD dynamics~\eqref{eq:CTSVGD}, given in the following lemma.

\begin{lemma}\label{lem:finite}
Under the same setting of Theorem~\ref{thm:w2asymp}, we have that
\begin{align*}
\limsup_{N\to\infty}~\sup_{T\geq 1}~\E\left[\frac{1}{T} \int_0^T \frac{1}{N}\sum_{i=1}^N \| x_i(t)\|^2 \d t\right] < \infty.
\end{align*}
\end{lemma}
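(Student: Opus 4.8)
The plan is to control the growth of the path functional $W(t) \coloneqq \frac1N \sum_{i=1}^N V(x_i(t))$ rather than the second moment directly, because each of the three kernels $1$, $\langle u,v\rangle$ and $\Psi(u-v)$ comprising $\tilde\ker$ in~\eqref{eq:specifickernel} is positive-definite, so differentiating $W$ along~\eqref{eq:CTSVGD} produces a ``gradient--gradient'' double sum with a favourable sign. Concretely, writing $m(t) \coloneqq \frac1N\sum_i \|x_i(t)\|^2$ and differentiating along the ($\mathcal{C}^1$) trajectories,
\[
\tfrac{\d}{\d t} W(t) = -\tfrac{1}{N^2}\sum_{i,j} \tilde\ker(x_i,x_j)\langle \nabla V(x_i),\nabla V(x_j)\rangle + \tfrac{1}{N^2}\sum_{i,j}\langle \nabla V(x_i),\nabla_2\tilde\ker(x_i,x_j)\rangle .
\]
In the first sum, the constant and translation-invariant pieces contribute non-positively (being positive-definite kernels), so the sum is at most $-\|G(t)\|_{HS}^2$, where $G(t)\coloneqq \frac1N\sum_i x_i(t)\nabla V(x_i(t))^\top$ is the contribution of the bilinear piece; a trace/Cauchy--Schwarz step gives $\|G\|_{HS}^2 \ge (\operatorname{tr} G)^2/d = q(t)^2/d$ with $q(t)\coloneqq \frac1N\sum_i \langle x_i(t),\nabla V(x_i(t))\rangle$. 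Since $\nabla_2\tilde\ker(u,v)=u-\nabla\Psi(u-v)$, the second sum equals $q(t)$ plus a term of modulus at most $\lambda_b\,(\sup_z\|\nabla\Psi(z)\|)(1+\sqrt{m(t)})$, by the growth bound on $\nabla V$. Hence $\tfrac{\d}{\d t}W(t) \le -q(t)^2/d + q(t) + c(1+\sqrt{m(t)})$.

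Next I would turn this into a dissipation estimate for $m(t)$. Averaging the dissipativity inequality over the particles gives $q(t) \ge \alpha m(t) - \beta_1\sqrt{m(t)} - \beta_0 + d$, which is $\ge \tfrac{\alpha}{2}m(t)$ once $m(t)$ is large; combining this with the elementary bound $-q^2/d + q \le -q^2/(2d)+d/2$ and absorbing the $\sqrt{m}$ term yields $\tfrac{\d}{\d t}W(t) \le -c_1 m(t)^2 + c_2$ for constants $c_1>0,c_2$ depending only on $d$, on the constants in Assumption~\ref{assp:disgro}, and on $\sup\|\nabla\Psi\|$. Dissipativity also forces $V(x)\ge \tfrac{\alpha}{4}\|x\|^2 - C$ (radial integration of $\langle\nabla V(x),x\rangle$), so $W$ is bounded below and $m(t) \le \tfrac{4}{\alpha}(W(t)+C)$. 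Integrating on $[0,T]$ and using $W(T)\ge -C$ gives $c_1\int_0^T m(t)^2\,\d t \le W(0)+C+c_2T$; dividing by $T\ge 1$, applying Cauchy--Schwarz in time, and then Jensen (concavity of $\sqrt{\cdot}$) under $\E$ over $x(0)\sim p(0)$ gives $\E[\frac1T\int_0^T m(t)\,\d t] \le \big(\tfrac{\E[W(0)]+C}{c_1} + \tfrac{c_2}{c_1}\big)^{1/2}$, uniformly in $T\ge1$. Finally I would bound $\E[W(0)]$ along $N$: growth gives $V(x)\le C'(1+\|x\|^2)$, $\pi$ is sub-Gaussian by dissipativity so $\E_\pi[e^{\lambda\|X\|^2}]<\infty$ for small $\lambda>0$, and the Donsker--Varadhan variational representation of relative entropy (cf.\ \citet[Prop.\ 2.3]{budhiraja2019analysis}) applied to $\pi^{\otimes N}$ gives $\E_{p(0)}[m(0)] \le \lambda^{-1}\big(\tfrac1N\KL(p(0)\|\pi^{\otimes N}) + \log \E_\pi[e^{\lambda\|X\|^2}]\big)$, which stays bounded by the hypothesis $\limsup_N \tfrac1N\KL(p(0)\|\pi^{\otimes N})<\infty$. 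Chaining these bounds proves the lemma.

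The hard part is producing the quadratic-in-$m(t)$ restoring term $-q(t)^2/d$, and this is precisely where the bilinear component $\langle u,v\rangle$ of~\eqref{eq:specifickernel} is indispensable: it makes $\operatorname{tr} G = q$ appear in the positive-definite double sum, and dissipativity upgrades $q$ to a quantity of order $\alpha\,m(t)$, which is what dominates the expansive term $+q$ coming from $\nabla_2\langle u,v\rangle = u$; a bounded, translation-invariant kernel alone supplies no such term, consistent with the paper's remark that the bilinear piece is the key. The only other point to verify is the (routine) differentiability of the trajectories, which is granted by the paper's standing regularity and completeness assumptions.
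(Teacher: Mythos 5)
Your proposal is correct and follows essentially the same route as the paper's own proof: differentiate $\frac1N\sum_i V(x_i(t))$ along \eqref{eq:CTSVGD}, drop the constant and translation-invariant kernel contributions by positive-definiteness, extract the $-q(t)^2/d$ restoring term from the bilinear part via Cauchy--Schwarz, upgrade it to $-c_1 m(t)^2$ using dissipativity, integrate in time, and control $\E[W(0)]$ through the entropy variational formula and sub-Gaussianity of $\pi$. The only differences are cosmetic (matrix Hilbert--Schmidt formulation versus coordinates, global dissipativity versus the paper's truncation at radius $A$, and applying Donsker--Varadhan to $\|x\|^2$ rather than to $V$ directly).
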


This result is proved %in Appendix \ref{app} 
using Lyapunov function techniques and plays a key role in the proof of Theorem~\ref{thm:w2asymp}.

\begin{proof}[Proof of Lemma \ref{lem:finite}]
    For this proof, we will abbreviate $x_i(t)$ as $x_i$. Note that, using the SVGD equations~\eqref{eq:CTSVGD}, we have
    \begin{align}\label{eqi}
    \begin{aligned}
        \frac{\d}{\d t}\left[\frac{1}{N}\sum_{i=1}^N V(x_i)\right] =& - \left\| \frac{1}{N}\sum_{i=1}^N \nabla V(x_i)\right\|^2 -\frac{1}{N^2}\sum_{i,j}\langle x_i, x_j\rangle \langle \nabla V(x_i), \nabla V(x_j)\rangle \\&+\frac{1}{N}\sum_i \langle x_i, \nabla V(x_i) \rangle -\frac{1}{N^2} \sum_{i,j}\langle \nabla V(x_i), \nabla\Psi(x_i-x_j) \rangle\\& - \underbrace{\frac{1}{N^2} \sum_{i,j}\langle \nabla V(x_i),\Psi(x_i-x_j)\nabla V(x_i) \rangle}_{\geq 0}.
    \end{aligned}
    \end{align}
    The non-negativity claim above is a consequence of positive-definiteness of the kernel obtained by $(u,v) \mapsto \Psi(u-v)$. Note that 
    \begin{align*}
        \frac{1}{N^2}\sum_{i,j}\langle x_i, x_j\rangle \langle \nabla V(x_i), \nabla V(x_j)\rangle & = \sum_{\ell,\ell'=1}^d \left( \frac{1}{N} \sum_{i=1}^N x_{i,\ell} (\nabla V(x_i))_{\ell'} \right)^2\\
        &\geq \sum_{\ell=1}^d\left( \frac{1}{N} \sum_{i=1}^N x_{i,\ell} (\nabla V(x_i))_{\ell}\right)^2\\
        &\geq \frac{1}{d} \left( \frac{1}{N} \sum_{i=1}^N \sum_{\ell=1}^d x_{i,\ell} (\nabla V(x_i))_{\ell} \right)^2\\
        &=\frac{1}{d} \left( \frac{1}{N} \sum_{i=1}^N \langle x_{i}, \nabla V(x_i) \rangle \right)^2,
    \end{align*}
where the penultimate step follows by Cauchy-Schwartz inequality. Using the above inequality in~\eqref{eqi}, we obtain
\begin{align}\label{eqii}
\begin{aligned}
    \frac{\d}{\d t}\left[\frac{1}{N}\sum_{i=1}^N V(x_i)\right] \leq& -\frac{1}{d} \left( \frac{1}{N} \sum_{i=1}^N \langle x_{i}, \nabla V(x_i) \rangle \right)^2- \left\| \frac{1}{N}\sum_{i=1}^N \nabla V(x_i)\right\|^2\\
    &+\frac{1}{N}\sum_i \langle x_i, \nabla V(x_i) \rangle -\frac{1}{N^2} \sum_{i,j}\langle \nabla V(x_i), \nabla\Psi(x_i-x_j) \rangle.
\end{aligned}
\end{align}
By Assumption~\ref{assp:disgro}, there exists $A,\alpha,\beta,\gamma >0$ such that 
\begin{align*}
    \langle x,\nabla V(x) \rangle & \geq \alpha \|x\|^2~~~\text{for}~~~ \|x\|\geq A,\\
    \|\nabla V(x) \| &\leq \beta \|x\|~~~\text{for}~~~ \|x\|\geq A,\\
    \| \nabla \Psi\|_\infty & \leq \gamma.
\end{align*}
Using the above in~\eqref{eqii}, and defining
\begin{align*}
 \Gamma(t)  \coloneqq \frac{1}{N}\sum_i \|x_i\|^2 \mathbbm{1}[\|x_i\| \geq A] ,
\end{align*}
we obtain 
\begin{align*}
    \frac{\d}{\d t}\left[\frac{1}{N}\sum_{i=1}^N V(x_i)\right] \leq&-\frac{\alpha^2}{d}\left(\Gamma(t)  \right)^2 + \left(\frac{2C\beta}{d}+\beta\right)\Gamma(t) + \beta\gamma \left(\Gamma(t)  \right)^{1/2} + C',
\end{align*}
where the constants $C,C'>0$ are independent of $N$ (but they depend on $A$). Thus, choosing picking a sufficiently large constant $B>0$ (which is independent of $N$), we obtain for a constant $C_B>0$ that, for all $T>0$,
\begin{align*}
    \frac{\alpha^2}{2d}\int_0^T \left(\Gamma(t)\right)^2 \mathbbm{1}(\Gamma(t) \geq B) \d t &\leq \frac{1}{N} \sum_i V(x_i(0)) +C_B T,\\
    \int_0^T \left(\Gamma(t)\right)^2 \mathbbm{1}(\Gamma(t) < B) \d t&\leq B^2T.
\end{align*}
Therefore, we obtain for all $T>0$,
\begin{align*}
     &\frac{1}{T}\int_0^T \frac{1}{N} \sum_i\|x_i(t)\|^2 \d t \leq \frac{1}{T}\int_0^T \left(\Gamma(t)\right)^2 \d t  + A^2, \\
 \text{with}~~\qquad&\frac{1}{T}\int_0^T \left(\Gamma(t)\right)^2 \d t \leq  B^2 + \frac{2d C_B}{\alpha^2} + \frac{2d}{NT\alpha^2} \sum_i  V(x_i(0)).\\
\end{align*}
Thus, for all $T\geq 1$ and $N\geq 1$ and for a constant $D>0$ (which is independent of $N$), 
\begin{align*}
    \E\left[\frac{1}{T} \int_0^T \frac{1}{N}\sum_{i=1}^N \| x_i(t)\|^2\right] \leq D + \frac{2d}{T\alpha^2} \E\left[\frac{1}{N} \sum_i  V(x_i(0)) \right].
\end{align*}
By the variational representation of relative entropy, for $\delta \in (0,1)$,
\begin{align*}
    \E\left[\frac{1}{N} \sum_i  V(x_i(0)) \right] \leq \frac{1}{\delta} \log \left(\int \exp^{\delta V(z)} \pi(\d z)\right) + \frac{1}{N \delta} \KL(p^N(0) || \pi^{\otimes N} ).
\end{align*}
By Assumption~\ref{assp:disgro}, $\pi$ is sub-Gaussian. Hence, we have
\begin{align*}
   \limsup_{N\to \infty} \E\left[\frac{1}{N} \sum_i  V(x_i(0)) \right] < \infty,
\end{align*}
from which, the result follows. 
\end{proof}

\begin{proof}[Proof of Theorem~\ref{thm:w2asymp}]
    Recall $\Hent(t) = \KL(p^N(t) || \pi^{\otimes N} )$. From the general $\KSD$ bound obtained in Theorem~\ref{thm:KSDrates}, with $\tilde\ker$, we obtain for every $T >0$,
    \begin{align*}
        \frac{1}{T}\int_0^T \E[\KSD^2(\mu^N(t)|| \pi)] \d t \leq \frac{\Hent(0)}{NT} + \frac{1}{N^2T}\int_0^T\E\left[\sum_{k=1}^N C^*\left(x_k(t)\right)\right]\d t,
    \end{align*}
    where $C^*(x_i(t))$ is as defined in~\eqref{eq:cstar} with the kernel $\tilde\ker$. Now note that we can obtain constant $C>0$ such that, for any $z\in\mathbb{R}^d$, 
    %\nabla_2 \ker(z,z) \nabla V(z) + \ker(z,z) \Delta V(z) - \Delta_2 \ker(z,z)
    \begin{align*}
    \nabla_2 \tilde \ker(z,z) \nabla V(z) & = \langle  z, \nabla V(z)\rangle \leq  \|z\| \|\nabla V(z)\| \leq C (1+ \|z\|^2),\\
    \tilde \ker(z,z) \Delta V(z) &\leq C (\|z\|^2 + 1 + \Psi(0)),\\
    - \Delta_2 \tilde \ker(z,z) &= - \Delta \Psi(0).
    \end{align*}
    Hence, for all $z\in\mathbb{R}^d$, we have that $C^*(z) \leq C_1 \|z\|^2 + C_2$, for some constants $C_1,C_2>0$. Therefore, we have for any $t >0$ and $N\geq 1$, that
    \begin{align*}
        \frac{1}{T}\int_0^T \E[\KSD^2(\mu^N(t)|| \pi)] \d t \leq \frac{\Hent(0)}{NT} + \frac{C_1}{NT}\int_0^T\E\left[\frac{1}{N}\sum_{k=1}^N \E[\|x_k(t)\|^2]\right]\d t + \frac{C_2}{N}.
    \end{align*}
    Now, using Lemma~\ref{lem:finite}, there exists a constant $C_4>0$ such that for any $T\geq 1$ and $N\geq 1$, 
    \begin{align*}
         \frac{1}{T}\int_0^T \E[\KSD^2(\mu^N(t)|| \pi)] \d t \leq \frac{\Hent(0)}{NT} + \frac{C_4}{N}.
    \end{align*}
    By the convexity of $\KSD$, we have for $N\geq 1$, 
    \begin{align*}
        \E[\KSD(\mu_{av}^M || \pi)] \leq \frac{C_0}{N^{1+\sigma/2}}.
    \end{align*}
    Hence, by Borel–Cantelli lemma, we have that $\KSD(\mu_{av}^M || \pi) \stackrel{a.s}{\to} 0$, as $N\to \infty$. The stated Wasserstein convergence result now follows by~\citet[Thm. 3.1]{kanagawa2022controlling} taking $m=\operatorname{Id}, q_m=1, q=2, L=L^{(2)}$ and $\Phi=\Psi$.
\end{proof}

\subsection{Proof of Theorem~\ref{thm:w2rates}}\label{subsec:Proof_Of_w2rates}
    By Theorem 3.5 in~\cite{kanagawa2022controlling}, we have that
    \begin{align*}
        \mathsf{W}_2(\mu_{av}^M , \pi) \leq C(d) (1 \vee \KSD(\mu_{av}^M || \pi)^{(1-r(d))})\KSD(\mu_{av}^M || \pi)^{r(d)},
    \end{align*}
where, from~\cite{kanagawa2022controlling} we have
\begin{align*}
    r(d) = \frac{1}{3\left(\frac{4d+1}{d}\right)}\frac{1}{1+t_1}~~\text{
where}~~t_1 = \frac{3d+1}{2} + \frac{1}{3} + \left[\frac{d+1}{d} + \frac{5}{3} \right] \nu,
\end{align*}
resulting in~\eqref{eq:rdexpression}. Define $\mathcal{E}\coloneqq \left\{\KSD(\mu_{av}^M || \pi) \leq \frac{C_0}{\epsilon N^{1+\sigma/2}} \right\}$. By Theorem~\ref{thm:w2asymp} and Markov's inequality, we have that $\mathbb{P}[\mathcal{E}^c] \leq \epsilon$. On the event $\mathcal{E}^c$, for $N \geq \left(\frac{C_0}{\epsilon}\right)^{\frac{2}{2+\sigma}}$, we have
\begin{align*}
    \mathsf{W}_2(\mu_{av}^M , \pi) \leq C(d) \left(\frac{C_0}{\epsilon N^{1+\sigma/2}}\right)^{r(d)},
\end{align*}
thereby proving the claim.

\subsection{Proof of Proposition~\ref{POC}}\label{subsec:POC}
Let $\mathcal{P}(\mathbb{R}^d)$ denote the space of probability measures on $\mathbb{R}^d$, and denote by $\mathcal{P}(\mathcal{P}(\mathbb{R}^d))$ the space of probability measures on $\mathcal{P}(\mathbb{R}^d)$. Let $\mathcal{L}(\mu_{av}^M)$ denote the law of the random measure $\mu_{av}^M$ and $\delta_{\pi}$ denote the Dirac measure at $\pi$ in $\mathcal{P}(\mathcal{P}(\mathbb{R}^d))$. 

By Lemma \ref{lem:finite} and exchangeability,
\begin{equation}\label{l2chaos}
\sup_{N \ge 1}\E\left[\int_{\mathbb{R}^d}\|x\|^2\mu_{av}^M(\d x)\right] = \sup_{N \ge 1}\E\left[\int_{\mathbb{R}^d}\|x\|^2\bar\mu^M_1(\d x)\right]  < \infty.
\end{equation}
Moreover, by Assumption \ref{assp:disgro}(a), $\int_{\mathbb{R}^d}\|x\|^2 \pi(\d x) < \infty$. Hence, by \citet[Theorem 3.21]{chaintron2022propagation}, we conclude that $\mathsf{W}_1(\bar\mu^M_k, \pi^{\otimes k}) \to 0$ if and only if $\mathcal{W}_1\left(\mathcal{L}(\mu_{av}^M), \delta_{\pi}\right) \to 0$ as $N \to \infty$, where $\mathcal{W}_1$ is the Wasserstein distance on the space $\mathcal{P}(\mathcal{P}(\mathbb{R}^d))$ equipped with the distance function $\mathsf{W}_1$ as defined in \citet[Definition 3.5]{chaintron2022propagation}. 

Note that $\mathcal{W}_1\left(\mathcal{L}(\mu_{av}^M), \delta_{\pi}\right) \le \E \left[\mathsf{W}_1(\mu_{av}^M , \pi)\right]$. By Theorem~\ref{thm:w2asymp} and Jensen's inequality, 
$$
\mathsf{W}_1(\mu_{av}^M , \pi)\stackrel{a.s}{\to}  0\qquad\text{as}\qquad N\to \infty.
$$
Moreover, observe that
$$
\mathsf{W}_1^2(\mu_{av}^M , \pi) \le \mathsf{W}_2^2(\mu_{av}^M , \pi) \le 2\int_{\mathbb{R}^d}\|x\|^2\mu_{av}^M(\d x) + 2 \int_{\mathbb{R}^d}\|x\|^2 \pi(\d x)
$$
and hence, by \eqref{l2chaos} and Assumption \ref{assp:disgro}(a), $\sup_{N \ge 1}\E\left[\mathsf{W}_1^2(\mu_{av}^M , \pi)\right] < \infty$. In particular, $\{\mathsf{W}_1(\mu_{av}^M , \pi) : N \ge 1\}$ is uniformly integrable and thus $\E \left[\mathsf{W}_1(\mu_{av}^M , \pi)\right] \to 0$ as $N \to \infty$. The result follows.

\end{document}